\documentclass[12pt,reqno]{amsart}
\usepackage{amsfonts, amssymb, mathrsfs, amsmath, amsthm}
\usepackage{fullpage} 
\usepackage[colorlinks=true]{hyperref}
\usepackage{appendix}
\usepackage{bbm}

\usepackage{comment}
    \usepackage{url}
    \usepackage{arydshln}

\makeatletter
\let\ams@starttoc\@starttoc
\makeatother
\usepackage[parfill]{parskip}
\makeatletter
\let\@starttoc\ams@starttoc
\patchcmd{\@starttoc}{\makeatletter}{\makeatletter\parskip\z@}{}{}
\makeatother
\usepackage[colorlinks=true]{hyperref}
\usepackage{cleveref}
\usepackage{thm-restate}
\theoremstyle{definition}

\newtheorem{definition}{Definition}[section]
\newtheorem{theorem}[definition]{Theorem}
\newtheorem{lemma}[definition]{Lemma}
\newtheorem{corollary}[definition]{Corollary}
\newtheorem{proposition}[definition]{Proposition}
\newtheorem{remark}[definition]{Remark}
\newtheorem{example}[definition]{Example}
\newtheorem{conjecture}[definition]{Conjecture}
\newtheorem{problem}[definition]{Problem}

\crefname{section}{Section}{Sections}
\crefname{definition}{Definition}{Definitions}
\crefname{theorem}{Theorem}{Theorems}
\crefname{lemma}{Lemma}{Lemmas}
\crefname{corollary}{Corollary}{Corollaries}
\crefname{proposition}{Proposition}{Propositions}
\crefname{remark}{Remark}{Remarks}
\crefname{example}{Example}{Examples}
\crefname{conjecture}{Conjecture}{Conjectures}
\crefname{problem}{Problem}{Problems}
\crefname{question}{Question}{Questions}
\crefname{formula}{Formula}{Formulas}

\newcommand{\ZZ}{\mathbb{Z}}

\newcommand{\KK}{\mathbb{K}}
\newcommand{\FF}{\mathbb{F}}

\newcommand{\AAA}{\mathcal{A}}
\newcommand{\BBB}{\mathcal{B}}

\newcommand{\Der}{\operatorname{Der}}

\newcommand{\rank}{\operatorname{rank}}

\newcommand{\Sym}{\operatorname{Sym}}
\newcommand{\Hom}{\operatorname{Hom}}

\newcommand{\character}{\operatorname{char}}

\usepackage{tcolorbox}
\newcommand{\red}[1]{\textcolor{red}{#1}}

\title[On an Exponent-Preserving Duality for Coxeter Multiarrangements of Type $B_2$]{On an Exponent-Preserving Duality for Coxeter Multiarrangements of Type $B_2$}
\author[S.~Maehara]{Shota Maehara}
\email{maehara.shota.027@s.kyushu-u.ac.jp}
\address{Joint Graduate School of Mathematics for Innovation, Kyushu University, 744 Motooka Nishi-ku Fukuoka 819-0395, Japan}
\keywords{hyperplane arrangements; multiarrangements; logarithmic derivation modules}
\subjclass{32S22, 52C35, 13N15}

\begin{document}

\begin{abstract}
    In the theory of hyperplane arrangements, 
    the study of logarithmic derivation modules 
    is one of the most important subjects. 
    If the module is free 
    over a graded ring, 
    the multiset of degrees of its homogeneous basis 
    is called exponents. 
    We utilize some basic theory of linear algebra 
    to study the exponents of multiarrangements. 
    As an application, we show a new exponent-preserving duality 
    for the Coxeter multiarrangements of type $B_2$, 
    which was conjectured by Torsten Hoge. 
\end{abstract}

\maketitle


\section{Introduction}\label{section:introduction} 
Let $\KK$ be a field, 
$V:=\KK^\ell$, $V^*:=\Hom_\KK(V,\KK)$ and $S:=\Sym(V^*)$. 
Throughout the article, let us use the symbol 
$p:=\character\KK$, which is zero or a prime number. 
By fixing a $\KK$-linear basis $\{x_1,\ldots,x_\ell\}$ of $V^*$, 
we identify $S$ with a polynomial ring $\KK[x_1,\ldots,x_\ell]$. 
For the polynomial ring $S$, 
a $\KK$-linear map $\theta:S\to S$ is called a \emph{$\KK$-derivation} 
if it satisfies 
$\theta(fg)=\theta(f)g+f\theta(g)$ for all $f,g\in S$. 
According to this rule, 
it follows immediately that
$\theta(1)=\theta(1\cdot1)=\theta(1)+\theta(1)$, 
which implies $\theta(k)=0$ for any $k\in\KK$. 
Moreover, $\theta$ is determined by $\{\theta(x_j)\mid 1\leq j \leq \ell\}$. 
Let $\Der S$ denote the set of $\KK$-derivations of $S$. 
Then, $\Der S$ is a free $S$-module 
generated by 
$\left\{\partial_{x_i}:=\frac{\partial}{\partial{x_i}} \mid 1 \leq i \leq \ell\right\}$, 
where $\partial_{x_i}(x_j)$ is the Kronecker delta $\delta_{ij}$. 
We sometimes identify 
$\Der S=\bigoplus_{i=1}^\ell S\partial_{x_i}$ with $S^{\oplus\ell}$ 
under the isomorphism 
$\theta\mapsto\oplus_{i=1}^\ell \theta(x_i)$. 
In case $\ell=2,3$, we often use the coordinates 
$\{x,y,z\}$ instead of $\{x_1,x_2,x_3\}$, just for readability. 

Our main interest is the \emph{freeness} of 
a specific submodule of $\Der S$ 
called \emph{logarithmic derivation module} (see \Cref{definition:derivation module}), 
associated with \emph{hyperplane arrangements} or \emph{multiarrangements}. 
%
We call a finite set of \emph{hyperplanes}, 
codimension one subspaces of the ambient space $V=\KK^\ell$, 
an \emph{$\ell$-dimensional hyperplane arrangement} or \emph{$\ell$-arrangement} for short. 
As an easy generalization of hyperplane arrangements, 
a finite \emph{multiset} of hyperplanes is called \emph{multiarrangement}. 
We usually state a multiarrangement by a pair of 
its underlying arrangement $\AAA$ and a \emph{multiplicity} $m:\AAA\to\ZZ_{>0}$. 
In particular, when we assume the constant multiplicity 
$\mathbbm{1}:\AAA\to \{1\}$, we identify $(\AAA,\mathbbm{1})$ with 
the underlying arrangement $\AAA$. 
In this sense, an arrangement $\AAA=(\AAA,\mathbbm{1})$ is said to be \emph{simple}. 
For a multiarrangement $(\AAA,m)$, 
let $|m|:=\sum_{H\in\AAA}m(H)$ be its \emph{size}, 
or the cardinality as a multiset. 
For each hyperplane $H$, 
let $\alpha_H\in S_1$ denote the defining linear form, 
which is unique up to scalar multiple. 
Then, it is also useful to let 
$Q(\AAA,m):=\prod_{H\in\AAA}\alpha_H^{m(H)}$ 
denote the \emph{defining polynomial} 
of the multiarrangement $(\AAA,m)$. 

In the theory of hyperplane arrangements, 
the famous \emph{Terao Conjecture} 
is a central question, 
which asks whether the freeness of logarithmic derivation modules of simple arrangements, 
$D(\AAA,\mathbbm{1})$, 
is combinatorially determined (for example, see \cite{Orlik-Terao}). 
When $\ell=2$, 
the modules $D(\AAA,m)$ are known to be free for any $\AAA$ and $m$ (see \cite{Ziegler}, Corollary 7). 
On the other hand, when $\ell\geq3$, 
the modules $D(\AAA,m)$ are not free in general, including the simple case. 
For example, 
Abe \cite{Abe-deleted-A3}, 
DiPasquale--Francisco--Mermin--Schweig \cite{Dipasquale-et-al-A3}, and 
DiPasquale--Wakefield \cite{DiPasquale-Wakefield-X3} 
completely classified the multiplicities $m$ 
for which $D(\AAA,m)$ is free when the underlying $3$-arrangement $\AAA$ is 
the deleted-$A_3$, $A_3$, and $X_3$ arrangement, respectively.
%
%
As a very important criterion for Terao Conjecture, 
Yoshinaga \cite{Yoshinaga-criterion} showed that 
the freeness of any simple $3$-arrangement 
is determined by its combinatorics and 
the \emph{exponents} (the degrees of a homogeneous basis) 
of a restricted $2$-multiarrangement. 
In this article, we develop a linear-algebraic approach 
to study logarithmic derivation modules. 
As an application, we establish a new exponent-preserving duality 
for \emph{Coxeter multiarrangements of type $B_2$}, 
whose underlying arrangement is the reflection arrangement 
of the Weyl group of type $B_2$. 

\begin{definition}\label{definition:dual}
Assume $p\neq2$ 
and let $\AAA(B_2)$ be the \emph{Coxeter arrangement of type $B_2$}. 
%
%
Let $m$ be a multiplicity on $\AAA(B_2)$, 
satisfying $|m|\in2\ZZ$ and $m(H)<|m|/2$ for all $H\in\AAA(B_2)$. 
Then, we define the \emph{dual multiplicity} 
$m^\perp:\AAA(B_2)\to\ZZ_{>0}$ 
to satisfy $m^\perp(H)+m(H)=|m|/2$ for all $H\in\AAA(B_2)$. 
\end{definition}

Note that the defining polynomial of $\AAA(B_2)$ 
can be written as $Q(\AAA(B_2))=xy(x-y)(x+y)$. 
If we allow $p=2$, 
it holds that the degeneration $xy(x-y)(x+y)=xy(x+y)^2$ happens; 
see \cite{Tsujie-Uchiumi} for this case. 
Also, note that the dual operator 
$(-)^\perp$ 
preserves the size $|m|=|m^\perp|$ 
and is 
involutive as $(m^\perp)^\perp=m$. 

Our main theorem is the following, 
which was conjectured by Torsten Hoge \cite{Hoge-private} 
for $p=0$. 
\begin{theorem}[Conjectured by Hoge \cite{Hoge-private}]
\label{theorem:main}
Under the notation of \Cref{definition:dual}, if 
\begin{itemize}
    \item $p=0$\ \ or 
    \item $p\geq3$ and $|m|\leq2p+2$,  
\end{itemize}
then the exponents are preserved under the dual operation. 
\end{theorem}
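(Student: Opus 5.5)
Since $\ell=2$, $D(\AAA,m)$ is free of rank two with exponents $(d_1,d_2)$, $d_1\le d_2$ and $d_1+d_2=|m|$ (Saito's criterion, \cite{Ziegler}). The exponents are therefore determined by $d_1$, the smallest degree $d$ with $D(\AAA,m)_d\neq0$. Because $|m|=|m^\perp|$, it suffices to show that for each $d<|m|/2$ the spaces $D(\AAA,m)_d$ and $D(\AAA,m^\perp)_d$ are both zero or both nonzero. I would compare dimensions directly through linear algebra. Write $m=(a,b,c,e)$ for the multiplicities of $x,y,x-y,x+y$, and set $n=|m|/2$, so $m^\perp=(n-a,n-b,n-c,n-e)$. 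A derivation $\theta=f\partial_x+g\partial_y$ of degree $d$ lies in $D(\AAA,m)$ exactly when
\[
x^a\mid f,\quad y^b\mid g,\quad (x-y)^c\mid f-g,\quad (x+y)^e\mid f+g .
\]
These are linear conditions on the $2(d+1)$ coefficients of $f,g$. So $\dim D(\AAA,m)_d=2(d+1)-\rank M_d(m)$ for an explicit coefficient matrix $M_d(m)$.

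Next I would simplify by absorbing the first two conditions. Write $f=x^aF$ and $g=y^bG$ with $\deg F=d-a$ and $\deg G=d-b$. The remaining conditions say that $x^aF\mp y^bG$ vanishes to order $c$ (resp.\ $e$) at the points $(1,1)$ (resp.\ $(1,-1)$). After dehomogenizing with $t=y/x$, this becomes a Hermite interpolation problem. Its matrix has rows given by derivatives $\partial_t^k$, $k<c$, at $t=1$ and $k<e$, at $t=-1$, applied to the monomials $t^j$ of $F$ and to $t^{b+j}$ of $G$. The entries are binomial coefficients times $\pm1$. Non-vanishing of the derivation space in degree $d$ is then equivalent to rank deficiency of a $(c+e)\times(2d+2-a-b)$ matrix $N_d(m)$ of binomial coefficients. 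Equivalently, it is equivalent to the existence of a nonzero element in the kernel of the transposed system.

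The key step is to exhibit a duality between $N_d(m)$ and $N_{d}(m^\perp)$. The heuristic comes from the pairing $\langle \theta,\phi\rangle$ given by the determinant $f\,g'-g\,f'$. For $\theta\in D(\AAA,m)$ and $\phi\in D(\AAA,m^\perp)$, this determinant is divisible by $x^{n}y^{n}(x-y)^n(x+y)^n=Q^n$, which has degree $2n=|m|$. Hence it vanishes whenever $\deg\theta+\deg\phi<|m|$. This suggests that the kernel of $N_d(m)$ is identified with the cokernel of $N_{|m|-2-d}(m^\perp)$-type systems, or, more concretely, that $N_d(m^\perp)$ is (up to invertible row and column operations) the transpose of a complementary matrix built from $N_{d'}(m)$. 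I would try to realize this by taking the full Hermite matrix for the total data $(n,n,n,n)$. In degree $2n-2-d$ style ranges this square matrix is invertible, being essentially a Wronskian/confluent Vandermonde determinant, with nonvanishing determinant a product of powers of $2$ and binomial factors. I would then apply Jacobi's complementary minor identity: a minor of an invertible matrix equals, up to the determinant, the complementary minor of its inverse. The rows and columns selected by $m$ complement those selected by $m^\perp$. This gives $\rank N_d(m)$ in terms of $\rank N_d(m^\perp)$ and yields $\dim D(\AAA,m)_d=\dim D(\AAA,m^\perp)_d$ for $d<n$.

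I expect the main obstacle to be identifying the inverse of the full confluent matrix concretely enough to see that the complementary block is again, after rescaling, the matrix $N_d(m^\perp)$ rather than an unrelated one. I would first attempt this with an explicit inverse via partial fractions of $1/((1-t)^n(1+t)^n)$. The characteristic hypothesis enters at the same point. The determinant of the full matrix, and the rescaling factors (binomial coefficients and factorials up to about $n$), must be nonzero in $\KK$. This is where $p\ne2$ and $|m|\le 2p+2$, i.e.\ $n\le p+1$, should be exactly what makes the relevant factorials invertible. I would check small cases ($|m|=4,6$) to calibrate the precise indices before writing the general argument.
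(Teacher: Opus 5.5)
\textbf{Verdict: genuine gap.} Your set-up is correct, but the proposal stops exactly where the theorem lives. Recognizing the ``complementary'' system as the $m^\perp$-system is left as the main obstacle, and nothing you propose supplies it.

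\textbf{The missing identification.} Write $s=|m|/2$ (your $n$). You only need the single degree $d=s-1$, since $\dim_\KK D(\AAA,m)_{s-1}=s-d_1$. In that degree $N_{s-1}(m)$ is square ($c+e=2s-a-b$), so $\dim\ker N_{s-1}(m)=\dim\ker N_{s-1}(m)^{T}$, and the whole task is to identify the transposed system.

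The complementary-minor (nullity) identity adds nothing by itself. Take the natural invertible matrix in degree $s-1$ (full jets of $f-g$ along $x-y$ and of $f+g$ along $x+y$). The complementary block of its inverse merely re-encodes $D(\AAA,m)_{s-1}$. So all the content lies in showing that this block is $N_{s-1}(m^\perp)^T$ after rescaling, and that is what you have not done.

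The paper's proof gets this from two ingredients missing from your plan.
\begin{enumerate}
\item The apolar pairing $B(f,g)=g(\partial_y,-\partial_x)f$ on $S_{s-1}$, for which $(\alpha^jS_{s-1-j})^\perp=\alpha^{s-j}S_{j-1}$. It turns ``order $\ge m(H)$ along $H$'' into ``order $\ge m^\perp(H)$ along $H$''. Under it, the transpose of $\theta\mapsto(\theta(\alpha_H)\bmod\alpha_H^{m(H)})_H$ becomes $\sigma\colon(f_H)_H\mapsto\sum_H f_H\nabla\alpha_H$ on $\bigoplus_H\alpha_H^{m^\perp(H)}S_{m(H)-1}$.
\item The step where $\AAA(B_2)$ itself enters. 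The relation $\sum_Hf_H\nabla\alpha_H=0$ is solved by $f_{x\mp y}=-\tfrac12(f_x\mp f_y)$; that is, the relations among the normals are given by the same four normals up to scaling. This identifies $\ker\sigma$ with $D(\AAA,m^\perp)_{s-1}$.
\end{enumerate}
In your coordinates: the $B$-transpose of $N_{s-1}(m)$ sends $(h_-,h_+)\in(x-y)^{s-c}S_{c-1}\oplus(x+y)^{s-e}S_{e-1}$ to $(h_-+h_+,\,h_+-h_-)$ modulo $x^{s-a}S_{a-1}\oplus y^{s-b}S_{b-1}$. Setting $(f',g')=(h_-+h_+,\,h_+-h_-)$ identifies its kernel with $D(\AAA,m^\perp)_{s-1}$. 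No inverse of a confluent Vandermonde matrix is needed.

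Your motivating pairing is also wrong. For $\theta\in D(\AAA,m)$ and $\phi\in D(\AAA,m^\perp)$, $\det(\theta,\phi)$ is divisible only by $\prod_H\alpha_H^{\min(m(H),m^\perp(H))}$, not by $\alpha_H^{m(H)+m^\perp(H)}$. Moreover $Q^n$ has degree $4n$, not $2n$.

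\textbf{The characteristic endpoint.} Your expectation that $n\le p+1$ is exactly what makes the relevant factorials invertible fails at the endpoint. Since $B(x^iy^{s-1-i},x^{s-1-j}y^j)=\pm\delta_{ij}\,j!\,(s-1-j)!$, the pairing is nondegenerate iff $s-1<p$, i.e.\ $|m|\le 2p$. At $|m|=2p+2$ one has $s-1=p$ and $p!=0$. Likewise, a confluent matrix using $\partial_t^p$ on polynomials of degree $\le p$ has a zero row.

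This border case needs a separate argument, which the paper gives. The derivation $\theta_F=x^p\partial_x+y^p\partial_y$ satisfies $\theta_F(\alpha_H)=\alpha_H^p$ for all four lines, so $\theta_F\in D(\AAA,m)_p$ whenever $\max m\le p$. It is not a polynomial multiple of a lower-degree derivation, and $p\le|m|/2$, so Saito's criterion gives $\exp(\AAA,m)=(p,|m|-p)$. For $|m|\in\{2p,2p+2\}$ both $m$ and $m^\perp$ are balanced with all values $\le p$, hence both have these exponents.
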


\begin{example}[\cite{Hoge-private,Wiesner-private}]
\label{example:132-66-37-33}
    Assume $p=0$ and let $(\AAA(B_2),m)$ be the multiarrangement defined by 
    $Q(\AAA(B_2),m)=x^{132}y^{66}(x-y)^{37}(x+y)^{33}$. 
    Note that $|m|=132+66+37+33=268$ and 
    $Q(\AAA(B_2),m^\perp)=x^{2}y^{68}(x-y)^{97}(x+y)^{101}$. 
    By computer calculation, we have 
    $\exp(\AAA(B_2),m)=\exp(\AAA(B_2),m^\perp)=(133,135)$. 
    To check the calculation, for example, 
    see and use \cite{Macaulay2,Numata-algorithm,sagemath}. 
\end{example}
%

To prove \Cref{theorem:main}, 
we use the following proposition. 
Here we may assume any characteristic $p$ 
and any arrangement $\AAA$, not only $\AAA(B_2)$. 
See \Cref{definition:sigma} for the definition of $\sigma$. 
\begin{proposition}\label{lemma:main}
    Let $(\AAA,m)$ be an $\ell$-multiarrangement 
    and $n$ be a nonnegative integer that satisfies $n\geq m(H)$ for any $H\in\AAA$. 
    Then, we have 
    $$
    \dim_{\KK}D(\AAA,m)_n
    =\dim_{\KK}\ker\sigma+\ell\binom{n+\ell-1}{\ell-1}
    -\sum_{H\in\AAA}\left\{\binom{n+\ell-1}{\ell-1}-\binom{n-m(H)+\ell-1}{\ell-1}\right\}. 
    $$
    Especially, substituting $\ell=2$, we have 
    $$\dim_{\KK}D(\AAA,m)_n=\dim_{\KK}\ker\sigma + (2(n+1)-|m|).$$ 
\end{proposition}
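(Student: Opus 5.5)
The plan is to realize $D(\AAA,m)_n$ as the kernel of a $\KK$-linear map between finite-dimensional spaces and then apply rank--nullity. I expect $\sigma$ in \Cref{definition:sigma} to be this map or its transpose, so the term $\dim_\KK\ker\sigma$ should be the correction that records the failure of surjectivity. I have not seen \Cref{definition:sigma}, so this identification is an assumption of the plan.

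\emph{Step 1: $D(\AAA,m)_n$ as a kernel.} Identify $(\Der S)_n$ with $S_n^{\oplus\ell}$, of dimension $\ell\binom{n+\ell-1}{\ell-1}$. Define
$$\varphi:(\Der S)_n\longrightarrow\bigoplus_{H\in\AAA}\bigl(S/\alpha_H^{m(H)}S\bigr)_n,\qquad \theta\longmapsto\bigl(\theta(\alpha_H)\bmod\alpha_H^{m(H)}\bigr)_{H\in\AAA}.$$
This is $\KK$-linear because $\theta(\alpha_H)\in S_n$. By \Cref{definition:derivation module}, $\theta\in D(\AAA,m)$ exactly when $\alpha_H^{m(H)}\mid\theta(\alpha_H)$ for every $H$. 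Hence $D(\AAA,m)_n=\ker\varphi$.

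\emph{Step 2: dimension of the target.} Multiplication by $\alpha_H^{m(H)}$ is injective from $S_{n-m(H)}$ into $S_n$, and its image is $(\alpha_H^{m(H)}S)_n$. The hypothesis $n\geq m(H)$ makes $n-m(H)$ a nonnegative degree, so $\dim S_{n-m(H)}=\binom{n-m(H)+\ell-1}{\ell-1}$ with no boundary conventions needed. Therefore
$$\dim_\KK\bigl(S/\alpha_H^{m(H)}S\bigr)_n=\binom{n+\ell-1}{\ell-1}-\binom{n-m(H)+\ell-1}{\ell-1}.$$

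\emph{Step 3: rank--nullity.} We have
$$\dim\ker\varphi=\dim(\Der S)_n-\dim(\text{target})+\dim\operatorname{coker}\varphi.$$
Substituting Steps 1 and 2 gives the displayed formula, provided $\dim\operatorname{coker}\varphi=\dim_\KK\ker\sigma$.

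\emph{Step 4: identifying $\ker\sigma$ with the cokernel.} This is the main obstacle, and how to handle it depends on how $\sigma$ is defined.
\begin{itemize}
\item If $\sigma$ is the transpose (dual map) of $\varphi$, with respect to natural bases such as monomials in $S_n$ and a basis of each quotient, then $\dim\ker\sigma=\dim\operatorname{coker}\varphi$ is immediate from linear algebra.
\item If $\sigma$ is given instead by an explicit matrix, I would write the matrix of $\varphi$ in these bases and check that $\sigma$ is its transpose, possibly after invertible row and column operations. A convenient way to present $(S/\alpha_H^{m(H)})_n$ is through the dual pairing with the $(m(H)-1)$-st order jet of $f$ along $H$, i.e.\ derivatives in a direction transverse to $H$.
\end{itemize}

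\emph{Step 5: the case $\ell=2$.} The binomials become $n+1$ and $n-m(H)+1$, so each summand equals $m(H)$. The sum over $H$ is $|m|$, and we obtain $\dim_{\KK} D(\AAA,m)_n=\dim\ker\sigma+2(n+1)-|m|$.
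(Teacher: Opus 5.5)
Your frame matches the paper's. Your $\varphi$ is the paper's $\Psi_n$, so $D(\AAA,m)_n=\ker\Psi_n$. The count $\dim_{\KK} S_n/\alpha_H^{m(H)}S_{n-m(H)}=\binom{n+\ell-1}{\ell-1}-\binom{n-m(H)+\ell-1}{\ell-1}$, the rank--nullity step and the $\ell=2$ specialization are all correct. Your cokernel bookkeeping is equivalent to the paper's $\rank\sigma=\rank\Psi_n$.

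The gap is Step 4. You leave it as an assumption, yet it is the only substantive content of the proposition. As written you have proved only $\dim_{\KK} D(\AAA,m)_n=\ell\binom{n+\ell-1}{\ell-1}-\sum_{H}\bigl\{\binom{n+\ell-1}{\ell-1}-\binom{n-m(H)+\ell-1}{\ell-1}\bigr\}+\dim_{\KK}\operatorname{coker}\Psi_n$.

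Your first bullet does not apply as stated, because $\sigma$ is not literally a transpose. It is defined relative to a fixed non-degenerate bilinear form $B\colon S_n\times S_n\to\KK$. Its source is the subspace $\bigoplus_H(\alpha_H^{m(H)}S_{n-m(H)})^\perp$ of $S_n^{\oplus|\AAA|}$, with the perpendicular taken in the first slot of $B$. Its target is $S_n^{\oplus\ell}$, and $\sigma(\oplus_H f_H)=\oplus_{i=1}^{\ell}\sum_H\partial_{x_i}(\alpha_H)f_H$. Your second bullet is the right idea, but the identification has to go through $B$.

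Concretely, you need three facts.
\begin{enumerate}
\item For $Y\subset X$ and $B$ non-degenerate, $f\mapsto B(f,-)$ induces an isomorphism $Y^\perp\cong(X/Y)^*$. Summing over $H$, this identifies the source of $\sigma$ with $\bigl(\bigoplus_H S_n/\alpha_H^{m(H)}S_{n-m(H)}\bigr)^*$.
\item The form $B^\ell\bigl((e_i)_i,(h_i)_i\bigr)=\sum_i B(e_i,h_i)$ is again non-degenerate. So $\varphi\mapsto B^\ell(\varphi,-)$ identifies $S_n^{\oplus\ell}$ with $(S_n^{\oplus\ell})^*$.
\item For $\theta=\oplus_i h_i\in S_n^{\oplus\ell}$,
\[
\Psi_n^*\Bigl(\sum_H B(f_H,-)\Bigr)(\theta)=\sum_H B\Bigl(f_H,\sum_i\partial_{x_i}(\alpha_H)h_i\Bigr)=\sum_i B\Bigl(\sum_H\partial_{x_i}(\alpha_H)f_H,\,h_i\Bigr).
\]
Here $\sum_H B(f_H,-)$ is a well-defined functional on the quotients precisely because $f_H\in(\alpha_H^{m(H)}S_{n-m(H)})^\perp$. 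The second equality is plain bilinearity, since $\partial_{x_i}(\alpha_H)\in\KK$.
\end{enumerate}
Together these show that $\sigma$ is $\Psi_n^*$ composed on both sides with isomorphisms. Hence $\dim_{\KK}\ker\sigma=\dim_{\KK}\ker\Psi_n^*=\dim_{\KK}\operatorname{coker}\Psi_n$, which is exactly the equality your Step 3 needs; this composition check is what the paper carries out before concluding.

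No jet or apolar pairing is needed at this stage: any non-degenerate $B$ works. The apolar form $B_n$ is used only later, to compute the perpendicular spaces explicitly when $\ell=2$.
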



For $p\geq3$, 
the bound in 
\Cref{theorem:main} is sharp 
in the following sense. 
\begin{example}
    Assume $p\geq3$ and 
    let $(\AAA(B_2),m)$ be the multiarrangement defined by 
    $Q(\AAA(B_2),m)=x^{1}y^{3}(x-y)^{p}(x+y)^{p}$. 
    Note that $|m|=2p+4 > 2p+2$ and 
    $Q(\AAA(B_2),m^\perp)=x^{p+1}y^{p-1}(x-y)^{2}(x+y)^{2}$. 
    By definition, we have 
    $\theta_F:=x^p\partial_x+y^p\partial_y\in D(\AAA(B_2),m)$ and 
    $x\theta_F\in D(\AAA(B_2),m^\perp)$. 
    An easy calculation shows that these are minimal degree logarithmic derivations for each multiplicity, 
    so 
    \Cref{theorem:Saito's criterion} implies that 
    $\exp(\AAA(B_2),m)=(p,p+4)$ but $\exp(\AAA(B_2),m^\perp)=(p+1,p+3)$. 
    Also see \Cref{proposition:border}. 
\end{example}

\begin{remark}\label{remark:difficulty-B2-exponents}
    To completely classify the exponents of multiarrangements 
    for a fixed underlying arrangement $\AAA$ 
    is very difficult, even if $\AAA=\AAA(B_2)$. 
    Although there has been some progress for the type $B_2$, such as 
    \cite{Abe:chamber,Maehara-WYmatrix,2312.06356,Wakamiko:universal} 
    with $p=0$, 
    the complete classification is still unknown. 
    Exponents are calculated 
    if the multiarrangements are \emph{extendable} 
    (obtained as a restriction of a free arrangement; 
    see \cite{Yoshinaga-extendable}) 
    with a known free $3$-arrangement, as in \cite{Abe:B_2}, 
    but it is shown in \cite{Hoge-M-Wiesner} that 
    there exist infinitely many non-extendable Coxeter multiarrangements of type $B_2$ with $p=0$. 
    When we assume $p\geq3$, the study also seems difficult. 
    An important theory is provided 
    in \cite{Tsujie-Uchiumi} for positive $p$, 
    but it especially focuses on the case $|\AAA|=3$. 
\end{remark}
    Despite the difficulty mentioned in \Cref{remark:difficulty-B2-exponents}, 
    we are able to make the $B_2$-exponents classification process 
    much shorter by the following result: 
    Owing to \Cref{theorem:main}, 
    it suffices to consider 
    either $\exp(\AAA,m)$ or $\exp(\AAA,m^\perp)$. 

\begin{corollary}\label{corollary:MinPlusMaxIsUpToHalf}
    To classify the exponents of all Coxeter multiarrangements of type $B_2$ 
    with $p=0$, 
    it suffices to consider 
    multiplicities 
    $m:\AAA(B_2)\to \ZZ_{>0}$ 
    that satisfy 
    $$\min\{m(H)\mid H\in\AAA(B_2)\}+\max\{m(H)\mid H\in\AAA(B_2)\}
    \leq |m|/2.$$ 
\end{corollary}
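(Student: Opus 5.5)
The plan is to derive the corollary directly from \Cref{theorem:main} together with the involutivity and size-preservation of $(-)^\perp$. Fix a multiplicity $m$ on $\AAA(B_2)$ with $p=0$. The reduction has to handle two kinds of $m$: those for which $m^\perp$ is undefined ($|m|$ odd, or some $m(H)\geq |m|/2$), and those for which $m^\perp$ is defined but $m$ violates the inequality.

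\emph{Step 1: undefined duals.} If $|m|$ is odd or $m(H)\geq|m|/2$ for some $H$, I will show that $m$ itself already satisfies $\min+\max\leq|m|/2$, or else that its exponents are known by a standard result. When some $m(H_0)\geq |m|/2$, the multiplicity is ``unbalanced'': by the well-known computation for $2$-multiarrangements (via Saito's criterion, \Cref{theorem:Saito's criterion}), $\exp(\AAA,m)=(|m|-m(H_0),\,m(H_0))$ whenever $2m(H_0)\geq|m|-1$. This is the point I am least sure the paper has stated earlier, so I would prove it quickly. Take $\alpha_{H_0}^{m(H_0)}\partial$, where $\partial$ is the constant derivation annihilating $\alpha_{H_0}$, together with a derivation $\theta$ of degree $|m|-m(H_0)$ tangent to the product of the other $\alpha_H^{m(H)}$, for instance $\prod_{H\neq H_0}\alpha_H^{m(H)}\partial'$ with $\partial'(\alpha_{H_0})\neq 0$. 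Since the degrees sum to $|m|$, Saito's criterion applies. Thus these $m$ need no further classification. For odd $|m|$ with all $m(H)<|m|/2$, I would instead consider $\min+\max\leq(|m|-1)/2$ or the complementary case. If this odd case is not covered, I would interpret the corollary as concerning the range of \Cref{definition:dual}, i.e.\ even $|m|$ with $m(H)<|m|/2$, and state that restriction explicitly.

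\emph{Step 2: the main reduction.} Assume $|m|$ is even and $m(H)<|m|/2$ for all $H$. Write $s=|m|/2$, so that $m^\perp(H)=s-m(H)$. Then
$$\min m^\perp+\max m^\perp=(s-\max m)+(s-\min m)=2s-(\min m+\max m).$$
Hence $\min m^\perp+\max m^\perp\leq s$ holds if and only if $\min m+\max m\geq s$. Therefore at least one of $m$, $m^\perp$ satisfies the inequality in the corollary. By \Cref{theorem:main} with $p=0$, $\exp(\AAA(B_2),m)=\exp(\AAA(B_2),m^\perp)$, so computing exponents for whichever of the two satisfies the inequality determines both. Since $|m^\perp|=|m|$ and $(m^\perp)^\perp=m$, the dual stays in the same class and the correspondence is a bijection, which completes the reduction.

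The main obstacle is Step 1, namely making sure that multiplicities outside the domain of \Cref{definition:dual} are genuinely handled, either by the classical unbalanced formula or by being trivially inside the stated range. The algebraic identity in Step 2 is routine.
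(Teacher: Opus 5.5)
Your Step 2 is correct and is exactly the paper's own (one-line) justification: the identity $\min m^\perp+\max m^\perp=|m|-(\min m+\max m)$ together with \Cref{theorem:main}. The proposal still has a genuine gap in Step 1, because the corollary as stated covers \emph{all} multiplicities.

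\textbf{Balanced $m$ with $|m|$ odd.} Nothing you wrote handles this case. For example, $m=(2,2,2,3)$ is balanced with $|m|=9$. It violates the inequality, since $\min+\max=5>4.5$. It also fails your condition $2m(H_0)\ge|m|-1$, since $6<8$. Restricting the corollary to the domain of \Cref{definition:dual} weakens the statement rather than proving it.

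The missing idea is \Cref{theorem:upper bound of delta}, which applies precisely because $p=0$. For balanced $m$ it gives $|d_1-d_2|\le|\AAA(B_2)|-2=2$. Since $d_1+d_2=|m|$ is odd, $d_1-d_2$ is odd, so $\exp(\AAA(B_2),m)=((|m|-1)/2,(|m|+1)/2)$. Odd balanced multiplicities therefore need no classification at all, and together with Step 2 this gives the corollary as stated.

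\textbf{Unbalanced $m$.} Your ``quick proof'' of the unbalanced formula fails as written; neither derivation you propose lies in $D(\AAA,m)$.
\begin{itemize}
\item The derivation $\alpha_{H_0}^{m(H_0)}\partial$ with $\partial(\alpha_{H_0})=0$ sends each $\alpha_H$, $H\neq H_0$, to a nonzero scalar multiple of $\alpha_{H_0}^{m(H_0)}$. This is not divisible by $\alpha_H$.
\item The derivation $\prod_{H\neq H_0}\alpha_H^{m(H)}\partial'$ with $\partial'(\alpha_{H_0})\neq0$ sends $\alpha_{H_0}$ to a nonzero multiple of $\prod_{H\neq H_0}\alpha_H^{m(H)}$. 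This is not divisible by $\alpha_{H_0}$.
\end{itemize}
The roles are reversed. The low-degree generator is $\prod_{H\neq H_0}\alpha_H^{m(H)}\partial$ with $\partial(\alpha_{H_0})=0$. The degree-$m(H_0)$ generator needs a correction term, $\alpha_{H_0}^{m(H_0)}\partial'+g\,\partial$, with $g$ chosen to satisfy the congruences modulo each $\alpha_H^{m(H)}$.

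You do not need to reprove this, since the paper states it as \Cref{theorem:unbalanced} for $m(H_0)\ge|m|/2$. That citation is genuinely required: unbalanced $m$ always violate the inequality, because $\max m\ge|m|/2$ and $\min m\ge 1$. Your extension to $2m(H_0)=|m|-1$ becomes superfluous once the odd balanced case is handled by Abe's bound.
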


The rest of this article is composed as follows. 
In \Cref{section:preliminary}, 
we recall some standard notation and results 
in the theory of hyperplane arrangements. 
In \Cref{section:Proof of the main results}, we show the main result 
after building a basic but important $\KK$-linear map $\sigma$. 
In \Cref{section:Future Work}, 
we collect some open questions related to our results. 
%

\section{Preliminaries}\label{section:preliminary}
In this section, we prepare some basic results 
in the theory of hyperplane arrangements. 
First, we recall the definition of the logarithmic derivation module 
as follows. 
\begin{definition}\label{definition:derivation module}
    For an $\ell$-multiarrangement $(\AAA,m)$, we define the 
    \emph{logarithmic derivation module $D(\AAA,m)$}, 
    a submodule of $\Der S$, by 
    $\{\theta \in \Der S \mid \theta(\alpha_{H})\in\alpha_{H}^{m(H)}S 
    \text{ for all } H\in\AAA \}$. 
    Note that $D(\AAA,m)$ is a graded $S$-module 
    $D(\AAA,m)=\bigoplus_{n\geq0} D(\AAA,m)_n$. 
    When $D(\AAA,m)$ is free as an $S$-module and has a homogeneous basis 
    $\{\theta_i \mid 1 \leq i \leq \ell\}$, 
    we call $(\AAA,m)$ itself \emph{free} 
    with \emph{exponents} 
    $\exp(\AAA,m):=(\deg(\theta_1),\deg(\theta_2),\ldots,\deg(\theta_{\ell}) )$. 
\end{definition}

\begin{remark}\label{remark:exponents-leq-notation}
Recall that exponents is a multiset of degrees for a basis, 
not a tuple (see \cite{Orlik-Terao}, including its well-definedness). 
We sometimes write exponents as $(d_1,d_2,\ldots,d_\ell)_{\leq}$, 
which includes the inequality 
$d_1\leq d_2\leq\cdots \leq d_\ell$. 
In this manner, it is obvious that $d_1\leq|m|/\ell$. 
\end{remark}

The following result, usually called 
\emph{Saito's criterion} or 
\emph{Saito--Ziegler's criterion}, 
is useful to check the freeness of multiarrangements 
as long as we can obtain a candidate of a basis for the logarithmic derivation module. 
\begin{theorem}[\cite{Saito,Ziegler}] 
\label{theorem:Saito's criterion} 
    For an $\ell$-multiarrangement $(\AAA,m)$ 
    and a set of homogeneous derivations 
    $\{\theta_{i} \mid 1 \leq i \leq \ell\} \subset D(\AAA,m)$, 
    this set 
    forms a basis of $D(\AAA,m)$ if and only if 
    $\det((\theta_i(x_j))_{1 \leq i,j \leq \ell})=c\cdot Q(\AAA,m)$ 
    with some $c\in\KK\setminus\{0\}$.
    In particular, it follows that 
    $\sum_{1 \leq i \leq \ell}\deg(\theta_i)=|m|
    \left(:=\sum_{H\in\AAA}m(H)\right)$ 
    for a free multiarrangement equipped with a homogeneous basis 
    $\{\theta_{i} \mid 1 \leq i \leq \ell\}$.
\end{theorem}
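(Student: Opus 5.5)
The plan is the classical argument of Saito and Ziegler, run with the Jacobian-type matrix $M:=(\theta_i(x_j))_{1\leq i,j\leq\ell}$. The first step is to show that $Q(\AAA,m)$ divides $\det M$ for \emph{any} $\theta_1,\ldots,\theta_\ell\in D(\AAA,m)$. Fix $H\in\AAA$. After a linear change of coordinates, which multiplies $\det M$ by a nonzero constant, we may take $x_1=\alpha_H$. Every entry $\theta_i(\alpha_H)$ of the first column then lies in $\alpha_H^{m(H)}S$, so $\alpha_H^{m(H)}\mid\det M$. Distinct hyperplanes give pairwise coprime linear forms, and $S$ is a UFD, so the product $Q(\AAA,m)$ divides $\det M$.

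For the direction ``$\Leftarrow$'', assume $\det M=c\,Q(\AAA,m)$ with $c\neq0$.

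\emph{Independence.} Since $\det M\neq0$, the $\theta_i$ are linearly independent over $S$.

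\emph{Generation.} Take any $\theta\in D(\AAA,m)$. Because $\det M\neq0$, we can write $\theta=\sum_i f_i\theta_i$ with $f_i\in\operatorname{Frac}(S)$. By Cramer's rule, $f_i=\det M_i/\det M$, where $M_i$ is $M$ with row $i$ replaced by $(\theta(x_j))_j$. The rows of $M_i$ all come from $D(\AAA,m)$, so the first step gives $Q(\AAA,m)\mid\det M_i$. Hence $f_i\in S$.

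For the direction ``$\Rightarrow$'', assume the $\theta_i$ form a basis. By the first step, $\det M=g\,Q(\AAA,m)$ for some $g\in S$. Note that $Q(\AAA,m)\partial_{x_j}\in D(\AAA,m)$ for every $j$, because $Q(\AAA,m)\partial_{x_j}(\alpha_H)$ is divisible by $Q(\AAA,m)$ and hence by $\alpha_H^{m(H)}$. Writing each of these in the basis gives a matrix identity $Q(\AAA,m)I=AM$ with $A$ a polynomial matrix. Taking determinants yields $Q(\AAA,m)^\ell=\det A\cdot g\,Q(\AAA,m)$, so $g$ divides $Q(\AAA,m)^{\ell-1}$ and is therefore a product of the $\alpha_H$ up to a scalar.

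This is where the argument is delicate: we must rule out $\alpha_H\mid g$. The cleanest route I expect is localization at the prime $(\alpha_H)$. Choose coordinates with $x_1=\alpha_H$. Then $D(\AAA,m)_{(\alpha_H)}$ is free with basis $x_1^{m(H)}\partial_{x_1},\partial_{x_2},\ldots,\partial_{x_\ell}$, because the other hyperplanes' forms are units there. This basis has determinant $\alpha_H^{m(H)}$. Any two bases of the same free module differ by an invertible matrix, so after localization $\det M$ equals $\alpha_H^{m(H)}$ times a unit. Hence $\alpha_H^{m(H)+1}\nmid\det M$, and so $\alpha_H\nmid g$.

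It follows that $g$ is a nonzero constant $c$. This requires $\det M\neq0$, which holds because a basis of a rank-$\ell$ module, which contains $Q(\AAA,m)\Der S$, is $\operatorname{Frac}(S)$-independent.

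Finally, if the $\theta_i$ are homogeneous, then $\det M$ is homogeneous of degree $\sum_i\deg\theta_i$. Here the degree of $\theta$ means the degree of its coefficients $\theta(x_j)$. Comparing with $\deg Q(\AAA,m)=|m|$ gives $\sum_i\deg\theta_i=|m|$.
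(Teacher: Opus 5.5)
Your argument is correct. It is the standard proof of Saito's criterion: the column argument gives $Q(\AAA,m)\mid\det M$, Cramer's rule gives generation, and a local computation at each prime $(\alpha_H)$ shows that for a basis the $\alpha_H$-adic valuation of $\det M$ is exactly $m(H)$. The paper gives no proof of its own and simply quotes the result from \cite{Saito,Ziegler}, whose arguments run along these same lines.
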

Recall that logarithmic derivation modules are free 
if $\ell=2$ as mentioned in \Cref{section:introduction}. 
If a $2$-multiarrangement 
does not belong to \emph{balanced} class, 
it is easy to calculate the exponents. 
\begin{definition}\label{definition:balanced}
    When a multiplicity $m$ on a $2$-arrangement $\AAA$ satisfies 
    $m(H)<|m|/2$ for all $H\in\AAA$, 
    we call the multiplicity (and also the multiarrangement itself) \emph{balanced}. 
\end{definition}
\begin{theorem}[Case 2.1, \cite{Wakefield-Yuzvinsky}]\label{theorem:unbalanced}
    If a $2$-dimensional multiarrangement $(\AAA,m)$ is not balanced 
    with $m(H)\geq|m|/2$ for an $H\in\AAA$, 
    then we have $\exp(\AAA,m)=(m(H),|m|-m(H))$. 
\end{theorem}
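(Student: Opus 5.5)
We prove \Cref{theorem:unbalanced} by exhibiting an explicit homogeneous basis and applying \Cref{theorem:Saito's criterion}. After a linear change of coordinates, assume the distinguished hyperplane is $H=\{x=0\}$, so $\alpha_H=x$. Write $k:=m(H)$ and $Q':=\prod_{H'\in\AAA\setminus\{H\}}\alpha_{H'}^{m(H')}$, so that $Q(\AAA,m)=x^kQ'$ and $\deg Q'=|m|-k$. The hypothesis $k\geq |m|/2$ is equivalent to $k\geq \deg Q'$.

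The first candidate is $\theta_1:=x^k\partial_x$. It lies in $D(\AAA,m)$ for the following reasons.
\begin{itemize}
\item We have $\theta_1(x)=x^k$.
\item For any other $H'$, the form $\alpha_{H'}=ax+by$ satisfies $b\neq0$, since $H'\neq H$. Then $\theta_1(\alpha_{H'})=ax^k$.
\item This is not obviously divisible by $\alpha_{H'}^{m(H')}$, so $\theta_1$ has to be corrected.
\end{itemize}

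The natural pair is instead the following.
\begin{itemize}
\item Take $\theta_1:=x^k\partial_x+g\,\partial_y$.
\item Take $\theta_2:=Q'\,\partial_y$. This is logarithmic: $\theta_2(x)=0$, and $\theta_2(\alpha_{H'})=bQ'$ is divisible by $\alpha_{H'}^{m(H')}$.
\item The polynomial $g$ of degree $k$ must make $\theta_1(\alpha_{H'})=ax^k+bg$ divisible by $\alpha_{H'}^{m(H')}$ for every $H'\neq H$.
\end{itemize}

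The determinant of the coefficient matrix is then $x^kQ'=Q(\AAA,m)$, independently of $g$. So \Cref{theorem:Saito's criterion} gives the exponents $(k,|m|-k)$ once $g$ exists.

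The main step is the existence of $g$. On each line $\alpha_{H'}=0$ we have $y=-ax/b$. The condition on $g$ says that $g\equiv -(a/b)x^k$ modulo $\alpha_{H'}^{m(H')}$. The moduli $\alpha_{H'}^{m(H')}$ for distinct $H'$ are pairwise coprime, and their product is $Q'$. So by the Chinese remainder theorem in $S$ there is a solution $g$, unique modulo $Q'$.

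We need a homogeneous solution of degree exactly $k$. Work degree by degree: in degree $k$, consider the map
$$S_k\to\bigoplus_{H'\neq H} \left(S/\alpha_{H'}^{m(H')}\right)_k.$$
Its kernel is $(Q'S)_k$, which has dimension $k-\deg Q'+1$, because $k\geq\deg Q'$. For each $H'$ with $m(H')\leq k$, the summand has dimension $m(H')$. The summands for distinct $H'$ satisfy $\sum_{H'\neq H} m(H')=\deg Q'$. Rank–nullity then gives
$$\dim(\text{image})=(k+1)-(k-\deg Q'+1)=\deg Q',$$
which equals the dimension of the target. Hence the map is surjective and a homogeneous $g$ of degree $k$ exists. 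The inequality $k\geq|m|/2$ is used exactly here, both to keep the kernel dimension formula valid and to ensure each $m(H')\leq k$.

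Finally, both $\theta_1$ and $\theta_2$ are homogeneous, of degrees $k$ and $|m|-k$. Since their determinant is a nonzero scalar multiple of $Q(\AAA,m)$, \Cref{theorem:Saito's criterion} shows that they form a basis. Therefore $\exp(\AAA,m)=(m(H),|m|-m(H))$.
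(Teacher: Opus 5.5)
Your proof is correct, and it works over any field, which is consistent with the paper's remark that the statement holds in every characteristic $p$. The paper itself gives no proof of this result; it only cites Wakefield--Yuzvinsky, so there is no internal argument to compare yours against.

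One sentence should be deleted: the appeal to the Chinese remainder theorem in $S=\KK[x,y]$ is false as stated. The generators $\alpha_{H'}^{m(H')}$ are pairwise coprime, but the ideals they generate are not comaximal, since all of them lie in $(x,y)$. For example, no $g$ satisfies both $g\equiv 1 \pmod{x}$ and $g\equiv 0 \pmod{y}$. Your proof does not depend on that sentence, however. The degree-$k$ rank--nullity count shows directly that $S_k\to\bigoplus_{H'\neq H}S_k/\alpha_{H'}^{m(H')}S_{k-m(H')}$ is surjective. It uses two facts: the kernel is $Q'S_{k-\deg Q'}$, and $m(H')\le\deg Q'\le k$. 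That surjectivity is exactly what produces $g$. You can also drop the abandoned first candidate $x^k\partial_x$ from the write-up.

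For comparison, a shorter route avoids constructing $g$ at all. By Ziegler's theorem, $D(\AAA,m)$ is free, so $\exp(\AAA,m)=(d_1,d_2)_{\le}$ with $d_1+d_2=|m|$. Suppose $\theta=f\partial_x+h\partial_y\in D(\AAA,m)$ has degree $<k$. Then $x^k\mid f$ forces $f=0$, and then $Q'\mid h$. So every nonzero element of degree $<k$ is a multiple of $Q'\partial_y$. Combined with $Q'\partial_y\in D(\AAA,m)$ and $|m|-k\le k$, this gives $d_1=|m|-k$. That argument relies on freeness in dimension two. Yours obtains freeness from Saito's criterion instead, and it also exhibits an explicit basis.
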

Note that \Cref{theorem:unbalanced} is valid over any field $\KK$ of any characteristic $p$. 
On the other hand, for balanced $2$-multiarrangements with $p=0$, 
the next theorem is very important. 
\begin{theorem}[Theorem 1.6, \cite{Abe:chamber}]\label{theorem:upper bound of delta}
    Assume $p=0$. 
    For a balanced $2$-multiarrangement $(\AAA,m)$ 
    with $\exp(\AAA,m)=(d_1,d_2)$,  
    we have $|d_1-d_2|\leq|\AAA|-2$. 
\end{theorem}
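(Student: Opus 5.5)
The approach is by contradiction. Suppose $(\AAA,m)$ is balanced, $p=0$, and $\exp(\AAA,m)=(d_1,d_2)_{\leq}$ with $d_2-d_1\geq|\AAA|-1$. By \Cref{theorem:Saito's criterion}, $d_1+d_2=|m|$, so
$$d_1\leq\frac{|m|-|\AAA|+1}{2}.$$
Fix a nonzero $\theta_1=f\partial_x+g\partial_y\in D(\AAA,m)_{d_1}$ and let $\theta_E=x\partial_x+y\partial_y$ be the Euler derivation. The first step is to rule out $\theta_1\in S\theta_E$. If $\theta_1=h\theta_E$, then $\theta_1(\alpha_H)=h\alpha_H\in\alpha_H^{m(H)}S$ for every $H$. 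Hence $Q(\AAA,m-\mathbbm{1})$ divides $h$, which forces $d_1\geq|m|-|\AAA|+1$. Combining this with the upper bound on $d_1$ gives $|m|\leq|\AAA|-1$. That is incompatible with balancedness once $|\AAA|\geq3$; for $|\AAA|=2$ there is no balanced $m$ at all. So $P:=\det(\theta_E,\theta_1)=xg-yf$ is a nonzero homogeneous polynomial of degree $d_1+1$.

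The second step extracts divisibility information on $P$ near each line, and this is where $p=0$ enters. For a line $H$, choose coordinates with $\alpha_H=x$. Then $f=x^{m(H)}u$ and $P=xg-x^{m(H)}yu$. By Euler's identity in characteristic $0$, $(d_1)f=x\partial_xf+y\partial_yf$, and similarly for $g$. I plan to combine $\theta_1$ with the derivation $\partial_x f\,\partial_x+\partial_x g\,\partial_y$, or with $\theta_1$ precomposed with the Lie bracket $[\theta_E,\cdot]$, to produce a second element of $D(\AAA,m)$ of degree close to $d_1$. The goal is a derivation $\eta\in D(\AAA,m)$ with $\deg\eta\leq d_1+|\AAA|-2$ that is $S$-independent of $\theta_1$. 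Its existence would contradict $d_2\geq d_1+|\AAA|-1$: any homogeneous element of $D(\AAA,m)$ of degree $<d_2$ lies in $S\theta_1$. Concretely, the candidate is
$$\eta=\frac{Q(\AAA)}{\text{(suitable factor)}}\cdot\theta_E-(\text{polynomial})\cdot\theta_1,$$
built from $P$. The idea is to write $P$ modulo each $\alpha_H^{m(H)}$ and use the Chinese remainder theorem on the coprime powers $\alpha_H^{m(H)}$. Since $\deg Q(\AAA)=|\AAA|$, an expression $Q(\AAA)\theta_E/\ell_0-c\,\theta_1$ with a linear form $\ell_0$ has degree $|\AAA|$, and degree $d_1+|\AAA|-2$ is the target.

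The main obstacle is this construction of $\eta$, that is, showing that the local data force a second generator of degree at most $d_1+|\AAA|-2$. If the direct construction fails, I would fall back on an induction on $|m|$. Adding $1$ to $m(H)$ raises exactly one exponent by $1$, because $D(\AAA,m+\delta_H)\supset\alpha_H D(\AAA,m)$ and by \Cref{theorem:Saito's criterion} the exponents sum to $|m|+1$. So $|d_1-d_2|$ changes by exactly $1$ at each step. Starting from the boundary case $m(H)=|m|/2$, where \Cref{theorem:unbalanced} gives the gap explicitly, I would try to show that the gap cannot grow beyond $|\AAA|-2$ inside the balanced region. For this I would use a characteristic-zero argument: a derivation of degree $d_1$ in $D(\AAA,m)$ remains of minimal degree in $D(\AAA,m+\delta_H)$ for at most $|\AAA|-2$ consecutive additions while the multiplicity stays balanced.
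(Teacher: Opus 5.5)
The paper does not prove this statement; it quotes it from Abe. Judged on its own, your proposal has a genuine gap at the one step that carries content.

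\textbf{Where the gap is.} The inequality fails without balancedness (\Cref{theorem:unbalanced} gives gap $2m(H)-|m|$). It also fails for $p\geq3$: by \Cref{proposition:border}, the $B_2$ multiplicity with all $m(H)=p$ has exponents $(p,3p)$. So any proof must use both hypotheses essentially, and nothing you actually carry out does.

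Your first step is correct but uses neither hypothesis: $|m|\leq|\AAA|-1$ already contradicts $m(H)\geq1$. The decisive step is only a plan, and the proposed shape of $\eta$ cannot work. Indeed, $a\theta_E\in D(\AAA,m)$ forces $Q(\AAA,m-\mathbbm{1})\mid a$. Hence every element of $D(\AAA,m)\cap(S\theta_E+S\theta_1)$ outside $S\theta_1$ has degree at least $|m|-|\AAA|+1$. This exceeds $d_1+|\AAA|-2$ as soon as $d_2\geq2|\AAA|-2$, so no such combination can serve as $\eta$. The fallback induction merely restates the theorem as an unproven claim. Even granted, a bound on how long a fixed minimal derivation survives would not bound the gap, since increases and decreases of the gap can alternate and accumulate.

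\textbf{The missing idea.} You already write down $\partial_xf\,\partial_x+\partial_xg\,\partial_y$, but its role is to \emph{lower} multiplicities, not to land in $D(\AAA,m)$. For a constant vector $v$ put $\partial_v\theta_1:=\partial_v(f)\partial_x+\partial_v(g)\partial_y$. Then $(\partial_v\theta_1)(\alpha_H)=\partial_v(\theta_1(\alpha_H))\in\alpha_H^{m(H)-1}S$ for every $H$, and it even lies in $\alpha_{H_0}^{m(H_0)}S$ if $v\in H_0$.

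The key claim is $J:=\partial_xf\,\partial_yg-\partial_yf\,\partial_xg\neq0$.
\begin{itemize}
\item Euler's identity gives $xJ=d_1(f\,\partial_yg-g\,\partial_yf)$ and $yJ=d_1(g\,\partial_xf-f\,\partial_xg)$. So for $p=0$, $J=0$ forces $g/f\in\KK$ (or $f=0$), i.e.\ $\theta_1=h\,\partial_w$ for a constant derivation $\partial_w$. This is where $p=0$ is used; $\theta_F$ shows it fails otherwise.
\item In that case $h$ is divisible by $\prod_{\alpha_H(w)\neq0}\alpha_H^{m(H)}$. Hence $d_1\geq|m|-\max_Hm(H)>|m|/2$, contradicting \Cref{remark:exponents-leq-notation}. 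This is where balancedness is used.
\end{itemize}

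Therefore $\partial_x\theta_1,\partial_y\theta_1$ are $S$-independent elements of degree $d_1-1$ in $D(\AAA,m-\mathbbm{1})$, whose exponents sum to $|m|-|\AAA|$. Both exponents are then at most $d_1-1$, so $|m|-|\AAA|\leq2d_1-2$, which is exactly $d_2-d_1\leq|\AAA|-2$.

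Equivalently, in your own framework, take $\eta:=\frac{Q(\AAA)}{\alpha_{H_0}}\,\partial_{v_0}\theta_1$ with $0\neq v_0\in H_0$. It lies in $D(\AAA,m)_{d_1+|\AAA|-2}$, and in coordinates with $\alpha_{H_0}=x$ one has $\det(\theta_1,\partial_y\theta_1)=\frac{x}{d_1}J\neq0$. What has to be excluded is $\theta_1\in S\partial_w$, not $\theta_1\in S\theta_E$.
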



In this article, 
we focus on the $\KK$-vector space $D(\AAA,m)_n$, 
which is the set of homogeneous degree $n$-th logarithmic derivations. 
Here, recall that 
$D(\AAA,m)_n=D(\AAA,m)\cap (\Der S)_n$. 
The following lemma is easy, but plays an important role in our proof, 
especially when we assume a positive characteristic $p$. 
In a sense, we utilize this lemma instead of \Cref{theorem:upper bound of delta}. 
\begin{lemma}\label{lemma: exponents is settled by half minus one}
    Let $(\AAA,m)$ be a $2$-multiarrangement 
    with $|m|\in2\ZZ$ and $\exp(\AAA,m)=(d_1,d_2)_{\leq}$. 
    Setting $s:=|m|/2$, it follows that 
    $\dim_\KK D(\AAA,m)_{s-1}=s-d_1$. 
    This means that the exponents are given by 
    $\exp(\AAA,m)=(s-\dim_\KK D(\AAA,m)_{s-1},s+\dim_\KK D(\AAA,m)_{s-1})$. 
\end{lemma}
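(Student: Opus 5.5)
Since $D(\AAA,m)$ is free of rank $2$ (recalled in \Cref{section:introduction}), I will fix a homogeneous basis $\{\theta_1,\theta_2\}$ with $\deg\theta_1=d_1\leq d_2=\deg\theta_2$. By \Cref{theorem:Saito's criterion}, $d_1+d_2=|m|=2s$. Freeness gives the graded decomposition
$$
D(\AAA,m)_n \;=\; S_{n-d_1}\theta_1\oplus S_{n-d_2}\theta_2 ,
$$
with the convention $S_k=0$ for $k<0$. Hence
$$
\dim_\KK D(\AAA,m)_n=\dim_\KK S_{n-d_1}+\dim_\KK S_{n-d_2},
$$
and since $\ell=2$ we have $\dim_\KK S_k=k+1$ for $k\geq0$.

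Next I will evaluate this at $n=s-1$. From $d_1\leq d_2$ and $d_1+d_2=2s$ we get $d_1\leq s\leq d_2$, so $n-d_2=s-1-d_2\leq -1$ and the second summand vanishes. For the first summand, $n-d_1=s-1-d_1\geq -1$. If $d_1\leq s-1$, the summand equals $s-d_1$. If $d_1=s$, then $n-d_1=-1$ and the summand is $0=s-d_1$. In either case
$$
\dim_\KK D(\AAA,m)_{s-1}=s-d_1 .
$$

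Solving gives $d_1=s-\dim_\KK D(\AAA,m)_{s-1}$, and then $d_2=2s-d_1=s+\dim_\KK D(\AAA,m)_{s-1}$, which is the stated description of $\exp(\AAA,m)$.

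The only point needing care is the boundary case $d_1=s$, where the degree $s-1$ part is zero. The convention $S_{-1}=0$ handles it uniformly, so I expect no real obstacle.
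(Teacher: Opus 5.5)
Your proof is correct and takes essentially the same route as the paper's. Both arguments observe that in degree $s-1$ only multiples of the degree-$d_1$ basis element occur, so $D(\AAA,m)_{s-1}=S_{s-1-d_1}\theta_1$ has dimension $s-d_1$, and $d_2=2s-d_1$ then follows from Saito's criterion; your graded decomposition $S_{n-d_1}\theta_1\oplus S_{n-d_2}\theta_2$ additionally makes explicit the point the paper leaves implicit, namely that $d_2\geq s$ is what guarantees the products $x^{k-1-i}y^{i}\theta$ span $D(\AAA,m)_{s-1}$.
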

\begin{proof}
    Let $\theta\in D(\AAA,m)$ be a derivation of degree $d_1$ 
    and let $k:=s-d_1$ be a nonnegative integer. 
    Since $\theta$ is a minimal degree element in $D(\AAA,m)$, 
    $\dim_\KK D(\AAA,m)_{s-1}=0$ for $k=0$, as desired. 
    For $k>0$, 
    it follows that 
    $\{x^{(k-1)-i}y^{i}\cdot\theta \mid 0\leq i \leq k-1\}\subset D(\AAA,m)_{s-1}$ 
    forms a $\KK$-basis. 
    Hence we have 
    $\dim_\KK D(\AAA,m)_{s-1}=k=s-d_1$. 
    Finally, 
    $d_2$ is calculated to satisfy $d_1+d_2=2s$ by \Cref{theorem:Saito's criterion}. 
\end{proof}

At the end of this section, 
we give a useful characterization of the logarithmic derivation module. 
\begin{definition}
    Let $(\AAA,m)$ be an $\ell$-multiarrangement. 
    Then, define an $S$-homomorphism 
    \begin{align*}    
    \Psi: \Der S \cong S^{\oplus\ell} 
    &\to \bigoplus_{H\in\AAA} S/\alpha_H^{m(H)}S.\\
    \theta \ \ 
    &\mapsto \bigoplus_{H\in\AAA} \overline{\theta(\alpha_H)}
    \end{align*}
    Similarly, for arbitrary $n\geq\max \{m(H)\mid H\in\AAA\}$, 
    define a $\KK$-linear map 
    \begin{align*}    
    \Psi_n: (\Der S)_n \cong S_n^{\oplus\ell} 
    &\to \bigoplus_{H\in\AAA} S_n/\alpha_H^{m(H)}S_{n-m(H)}.\\
    \theta \ \ 
    &\mapsto \bigoplus_{H\in\AAA} \overline{\theta(\alpha_H)}
    \end{align*}
\end{definition}
    By these morphisms, 
    the logarithmic derivation modules can be characterized as follows. 

\begin{proposition}[cf. \cite{DiPasquale:homological}, Lemma 2.2]
\label{proposition:kernel-characterization-of-log-derivation-modules}
    It follows that 
    $\ker \Psi \cong D(\AAA,m)$ as an $S$-module, and 
    $\ker \Psi_n \cong D(\AAA,m)_n$ as a $\KK$-vector space. 
\end{proposition}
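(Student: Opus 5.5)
The plan is to verify both isomorphisms directly from \Cref{definition:derivation module}: check that $\Psi$ and $\Psi_n$ are well defined and linear, and then compute their kernels. In both cases the kernel will turn out to be literally equal to $D(\AAA,m)$ (respectively $D(\AAA,m)_n$) as a subset of $\Der S$ (respectively $(\Der S)_n$), which is stronger than the stated isomorphism.

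\textbf{The map $\Psi$.} For $\Psi$, $S$-linearity follows because $\theta\mapsto\theta(\alpha_H)$ is $S$-linear in $\theta$: we have $(f\theta+g\theta')(\alpha_H)=f\theta(\alpha_H)+g\theta'(\alpha_H)$. Composing with the quotient maps $S\to S/\alpha_H^{m(H)}S$ and taking the direct sum over $H\in\AAA$ preserves $S$-linearity. The kernel consists of those $\theta$ with $\overline{\theta(\alpha_H)}=0$ for every $H$, that is, $\theta(\alpha_H)\in\alpha_H^{m(H)}S$ for all $H\in\AAA$. This is exactly the defining condition of $D(\AAA,m)$, so $\ker\Psi=D(\AAA,m)$ as submodules of $\Der S$.

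\textbf{The map $\Psi_n$: well-definedness.} Here one first has to check that the target makes sense. For $\theta\in(\Der S)_n$, i.e.\ $\theta=\sum_i f_i\partial_{x_i}$ with $f_i\in S_n$, the element $\theta(\alpha_H)=\sum_i c_i f_i$ lies in $S_n$, since $\alpha_H=\sum_i c_ix_i$ is linear. Moreover $\alpha_H^{m(H)}S_{n-m(H)}$ is a $\KK$-subspace of $S_n$, using the hypothesis $n\geq m(H)$ so that $S_{n-m(H)}$ is the usual graded piece. So the quotient is a $\KK$-vector space and $\Psi_n$ is $\KK$-linear.

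\textbf{The map $\Psi_n$: the kernel.} The only point needing care, and the one I expect to be the main (mild) obstacle, is identifying $\ker\Psi_n$ with $D(\AAA,m)_n=D(\AAA,m)\cap(\Der S)_n$. Concretely, we must show that for homogeneous $f=\theta(\alpha_H)\in S_n$,
$$f\in\alpha_H^{m(H)}S \iff f\in\alpha_H^{m(H)}S_{n-m(H)}.$$
The direction $\Leftarrow$ is trivial. For $\Rightarrow$, write $f=\alpha_H^{m(H)}g$ with $g\in S$ and decompose $g=\sum_k g_k$ into homogeneous components. Since $\alpha_H^{m(H)}$ is homogeneous of degree $m(H)$, the degree-$n$ component of $\alpha_H^{m(H)}g$ is $\alpha_H^{m(H)}g_{n-m(H)}$, and all other components vanish because $f$ is homogeneous of degree $n$. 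Hence $f=\alpha_H^{m(H)}g_{n-m(H)}$ with $g_{n-m(H)}\in S_{n-m(H)}$. Alternatively, since $S$ is a domain, the other products $\alpha_H^{m(H)}g_k$ vanish, so $g_k=0$ for $k\neq n-m(H)$.

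With this equivalence, $\theta\in(\Der S)_n$ lies in $\ker\Psi_n$ if and only if $\theta\in D(\AAA,m)$. This gives $\ker\Psi_n=D(\AAA,m)_n$ as $\KK$-vector spaces and completes the proof.
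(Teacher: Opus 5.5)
Your proof is correct. The paper gives no argument of its own for this proposition: it cites DiPasquale (Lemma 2.2) and treats the statement as immediate from the definitions. Your direct check is exactly the routine verification being invoked: $\ker\Psi=D(\AAA,m)$ by definition, and the homogeneity argument gives $\alpha_H^{m(H)}S\cap S_n=\alpha_H^{m(H)}S_{n-m(H)}$, hence $\ker\Psi_n=D(\AAA,m)\cap(\Der S)_n=D(\AAA,m)_n$ as literal equalities inside $\Der S$.
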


\section{Proof of the main result}\label{section:Proof of the main results}

\subsection{The Border Case}
    When we show \Cref{theorem:main}, 
    the proof for the case $p=0$ 
    is applied to the case 
    $p\geq3$ and $|m|\leq2p$ 
    in exactly the same way. 
    However, when $|m|=2p+2$ with $p\geq3$, 
    the similar proof cannot be applied. 
    Hence, in this subsection, 
    we give a proof in another way for this \emph{border} case. 
    In fact, the exponents are always $(p,|m|-p)_{\leq}$. 

\begin{proposition}\label{proposition:border}
Assume $p>0$. 
Let $(\AAA,m)$ be a $2$-multiarrangement 
defined over $\FF_p$ 
that satisfies 
$\max\{m(H)\mid H\in\AAA\}\leq p \leq |m|/2$. 
Then, we have $\exp(\AAA,m)=(p,|m|-p)$. 
\end{proposition}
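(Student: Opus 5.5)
We will show that the derivation $\theta_F:=x^p\partial_x+y^p\partial_y$ lies in $D(\AAA,m)$ and that no nonzero logarithmic derivation of degree less than $p$ exists. Saito's criterion (\Cref{theorem:Saito's criterion}) then gives $\exp(\AAA,m)=(p,|m|-p)$.

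\emph{Step 1: $\theta_F\in D(\AAA,m)$.} Since $\AAA$ is defined over $\FF_p$, each defining form can be taken as $\alpha_H=ax+by$ with $a,b\in\FF_p$. The Frobenius identity $a^p=a$ gives
$$\theta_F(\alpha_H)=ax^p+by^p=(ax+by)^p=\alpha_H^p.$$
As $m(H)\leq p$ for all $H$, we get $\theta_F(\alpha_H)\in\alpha_H^{m(H)}S$. So $\theta_F$ is logarithmic of degree $p$, and the smaller exponent satisfies $d_1\leq p$.

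\emph{Step 2: $D(\AAA,m)_n=0$ for $n<p$.} Let $\theta=f\partial_x+g\partial_y$ be homogeneous of degree $n<p$ with $\theta\in D(\AAA,m)$. Form the polynomial $\theta(E)$-type pairing
$$h:=xg-yf=\det\begin{pmatrix} x & y\\ f & g\end{pmatrix},$$
the Saito determinant of $\theta$ against the Euler derivation $\theta_E=x\partial_x+y\partial_y$. Since $\theta_E\in D(\AAA,\mathbbm 1)$, the standard argument for Saito's criterion shows $h$ is divisible by $\alpha_H^{m(H)}$ for every $H$. Indeed, change coordinates so that $\alpha_H=x$. Then $x^{m(H)}\mid f$, and $h=xg-yf$ is divisible by $x$. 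Here there is a subtlety: we only get $x\cdot x^{m(H)-1}$? To fix this, compare instead with $\theta_F$: $\det(\theta,\theta_F)$ is divisible by $\alpha_H^{m(H)}$ for each $H$. In coordinates with $\alpha_H=x$, we have $\theta(x)=f\in x^{m(H)}S$ and $\theta_F(x)=x^p\in x^{m(H)}S$, so both entries of the first column are divisible by $x^{m(H)}$. Since distinct $\alpha_H$ are coprime, $Q(\AAA,m)$ divides $\det(\theta,\theta_F)$. This determinant has degree $n+p<2p\leq|m|$, hence it is zero. Therefore $\theta$ and $\theta_F$ are proportional over the fraction field: $fy^p=gx^p$.

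Now $\gcd(x^p,y^p)=1$, so $x^p\mid f$. Since $\deg f=n<p$, this forces $f=0$, and then $g=0$ as well. So $D(\AAA,m)_n=0$ for all $n<p$, and $d_1\geq p$.

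\emph{Step 3: conclusion.} Combining Steps 1 and 2 gives $d_1=p$. Then $d_2=|m|-p$ by \Cref{theorem:Saito's criterion}, i.e.\ by $d_1+d_2=|m|$. The hypothesis $p\leq|m|/2$ ensures $p$ is indeed the smaller exponent. Alternatively, \Cref{lemma: exponents is settled by half minus one} could be used when $|m|$ is even, but this is unnecessary.

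The main obstacle is Step 2, specifically the divisibility of $\det(\theta,\theta_F)$ by $Q(\AAA,m)$. This works cleanly once $\theta_F$ itself is logarithmic with the full multiplicity, which is exactly what $m(H)\leq p$ guarantees. The degree comparison $n+p<2p\leq|m|$ then forces the determinant to vanish.
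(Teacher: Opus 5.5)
Your proof is correct and follows the paper's strategy. In both, $\theta_F=x^p\partial_x+y^p\partial_y$ is logarithmic by the Frobenius identity, it is shown to have minimal degree, and Saito's criterion then gives the second exponent $|m|-p$. The only difference is the minimality step: the paper gets it from the primitivity of $\theta_F$ (no nonconstant polynomial divides both $x^p$ and $y^p$) together with Saito's criterion, while you force $\det(\theta,\theta_F)=0$ for $\deg\theta=n<p$ via $n+p<2p\le|m|$ and then use $\gcd(x^p,y^p)=1$, which is a more explicit version of the same idea; the abandoned Euler-derivation attempt in Step 2 should simply be deleted.
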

\begin{proof}
    Let $\theta_F:=x^p\partial_x+y^p\partial_y$. 
    Since $\theta_F(\alpha_H)=\alpha_H^p$ for any $H\in\AAA$, 
    we have $\theta_F\in D(\AAA,m)_p$. 
    Since there is no $\varphi\in\Der S$ such that 
    $\theta_F=f\cdot\varphi$ for any $f\in S\setminus\KK$, 
    \Cref{theorem:Saito's criterion} implies that 
    $\theta_F$ is a minimal degree derivation in $D(\AAA,m)$ and $\exp(\AAA,m)=(p,|m|-p)$. 
\end{proof}

As an easy corollary, we obtain a partial answer of \Cref{theorem:main}. 
\begin{corollary}\label{corollary:border}
    \Cref{theorem:main} is true for 
    $p\geq3$ and $|m|=2p,\ 2p+2$. 
\end{corollary}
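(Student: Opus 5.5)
The plan is to apply \Cref{proposition:border} directly to both $m$ and $m^\perp$, checking its hypotheses in each of the two cases $|m|=2p$ and $|m|=2p+2$. Then both exponent multisets equal $(p,|m|-p)$, and they agree because $|m^\perp|=|m|$.

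First I will note that $\AAA(B_2)$ is defined over $\FF_p$, since $Q(\AAA(B_2))=xy(x-y)(x+y)$ has integer coefficients. For $p\geq3$ the four lines stay distinct, as observed after \Cref{definition:dual}. So \Cref{proposition:border} applies to any multiplicity on $\AAA(B_2)$ that satisfies its numerical hypothesis
$$\max\{m(H)\mid H\in\AAA(B_2)\}\leq p\leq |m|/2.$$
Recall also that the duality assumes $m$ is balanced, that is, $m(H)<|m|/2$ for every $H$, with $m(H)\geq1$. Consequently $m^\perp(H)=|m|/2-m(H)$ satisfies $1\leq m^\perp(H)\leq |m|/2-1$, and $|m^\perp|=|m|$.

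In the case $|m|=2p$, balancedness gives $m(H)\leq p-1<p$ for every $H$, and $p\leq |m|/2=p$ holds with equality. The same bound $m^\perp(H)\leq p-1$ holds for the dual, and $|m^\perp|=2p$. \Cref{proposition:border} therefore gives $\exp(\AAA(B_2),m)=(p,p)=\exp(\AAA(B_2),m^\perp)$.

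In the case $|m|=2p+2$, balancedness gives $m(H)\leq p$, and $p\leq |m|/2=p+1$. Likewise $m^\perp(H)=p+1-m(H)\leq p$ because $m(H)\geq1$. Hence \Cref{proposition:border} yields $\exp(\AAA(B_2),m)=(p,p+2)=\exp(\AAA(B_2),m^\perp)$.

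No step here is a real obstacle. The only points needing care are the bound $\max m^\perp(H)\leq p$, which uses positivity of $m$, and the observation that $p\geq3$ keeps $\AAA(B_2)$ a genuine four-line arrangement defined over $\FF_p$.
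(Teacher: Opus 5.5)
Your proposal is correct and matches the paper's proof. Both arguments note that $\AAA(B_2)$ is defined over $\FF_p$ and that balancedness forces $\max_H m(H)\leq p\leq |m|/2$, and then invoke \Cref{proposition:border}; you additionally verify explicitly (using $m(H)\geq1$) that $m^\perp$ satisfies the same hypothesis, which the paper leaves implicit.
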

\begin{proof}
    The arrangement $\AAA(B_2)$ is defined over $\FF_p$
    for each $p\geq3$ and 
    the assumption of balancedness implies $m(H)\leq p$ for all $H\in\AAA$. 
    Hence, \Cref{proposition:border} derives the exponents. 
\end{proof}

\subsection{The Remaining Cases}
For the proof, we will construct a $\KK$-linear composition 
\begin{align}
\bigoplus_{H\in\AAA}\left(\alpha_H^{m(H)}S_{n-m(H)}\right)^\perp
&\stackrel{\cong}{\longrightarrow} 
\bigoplus_{H\in\AAA}\left(S_n/(\alpha_H^{m(H)}S_{n-m(H)})\right)^*\\
&\stackrel{\cong}{\longrightarrow} 
\left(\bigoplus_{H\in\AAA}S_n/(\alpha_H^{m(H)}S_{n-m(H)})\right)^*\\
&\stackrel{\Psi_n^*}{\longrightarrow} 
(S_n^{\oplus\ell})^*\\
&\stackrel{\cong}{\longrightarrow} 
S_n^{\oplus\ell}, 
\end{align}
where the maps $(1)$, $(2)$, $(3)$ and $(4)$ are defined 
under a bilinear map on $S_n$ 
as follows. 

\begin{definition}\label{definition:linear-maps}
Let $(\AAA,m)$ be an $\ell$-multiarrangement and 
$n$ be a nonnegative integer that satisfies $n\geq m(H)$ for all $H\in\AAA$. 
Under a non-degenerate bilinear map $B:S_n\times S_n\to\KK$, 
we define the maps, 
$(1)\ \bigoplus_{H\in\AAA} \Phi$, $(2)\ u$, $(3)\ \Psi_n^*$ and $(4)\ v$, 
as follows. 
\begin{enumerate}
        \item $\bigoplus_{H\in\AAA} \Phi: 
        \oplus f_H\mapsto \oplus\Phi_{f_H}$,         
        which is isomorphic by \Cref{lemma:PerpendicularComplement}. 
        \item $u: 
        \oplus\phi_H\to\sum\phi_H$ 
        under the basic isomorphism 
        $\oplus\Hom(X,\KK)\cong\Hom(\oplus X,\KK)$ for a $\KK$-vector space $X$. 
        \item $\Psi_n^*$: the dual map of $\Psi_n$. 
        %
        \item $v: 
        \omega\mapsto\varphi$ such that 
        $\omega(\theta)=B^\ell(\varphi,\theta),\ 
        (\forall \theta\in S_n^{\oplus\ell})$, 
        which is isomorphic by \Cref{lemma:billinear}.
\end{enumerate}
\end{definition}

We move on to define $\Phi,B^\ell$ in \Cref{definition:linear-maps}. 
Throughout this section, let $X$ be a finite dimensional $\KK$-vector space. 

\begin{lemma}\label{lemma:billinear}
    Let $B: X\times X \to \KK$ be a non-degenerate bilinear map. 
    Then, for any natural number $\ell\in\ZZ_{>0}$, 
    the following bilinear map becomes non-degenerate again: 
    $$B^\ell: X^{\oplus\ell}\times X^{\oplus\ell}\to \KK;\ 
    ((x_1,\ldots,x_\ell), (y_1,\ldots,y_\ell))\mapsto 
    \sum_{i=1}^\ell B(x_i,y_i).$$
\end{lemma}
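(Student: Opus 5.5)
The statement to prove is \Cref{lemma:billinear}: the bilinear map $B^\ell$ on $X^{\oplus\ell}$ is non-degenerate. Bilinearity of $B^\ell$ is immediate, since it is a finite sum of the bilinear maps $((x_i),(y_i))\mapsto B(x_i,y_i)$, each obtained by composing $B$ with the coordinate projections. So everything reduces to non-degeneracy. I will use the definition that for every nonzero $x\in X^{\oplus\ell}$ there is a $y$ with $B^\ell(x,y)\neq0$, and symmetrically in the second slot. Because $X$ is finite dimensional, one-sided non-degeneracy already forces the induced map $X^{\oplus\ell}\to (X^{\oplus\ell})^*$ to be an isomorphism. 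Still, I will verify both sides directly, since the argument is symmetric.

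Take a nonzero $x=(x_1,\ldots,x_\ell)$. Then some coordinate $x_j\neq0$. By non-degeneracy of $B$, there is $z\in X$ with $B(x_j,z)\neq0$. Now let $y\in X^{\oplus\ell}$ be the vector whose $j$-th coordinate is $z$ and whose other coordinates are $0$. Since $B(x_i,0)=0$ for $i\neq j$, we get
$$B^\ell(x,y)=\sum_{i=1}^\ell B(x_i,y_i)=B(x_j,z)\neq0.$$
Hence the left kernel of $B^\ell$ is trivial. The same argument with the roles of the two slots exchanged shows the right kernel is trivial as well.

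An equivalent, more structural route is to note that the map $X^{\oplus\ell}\to (X^{\oplus\ell})^*\cong (X^*)^{\oplus\ell}$ induced by $B^\ell$ is the direct sum of $\ell$ copies of the isomorphism $X\to X^*$ induced by $B$. A direct sum of isomorphisms is an isomorphism. This formulation is convenient later, because it justifies that the map $v$ in \Cref{definition:linear-maps} is an isomorphism.

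There is no real obstacle here. The only point needing care is fixing which notion of non-degeneracy is meant, and the finite-dimensionality of $X$ makes the one-sided and two-sided notions coincide.
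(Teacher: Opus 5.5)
Your proof is correct and is essentially the paper's argument: take a nonzero coordinate, use non-degeneracy of $B$ there, and pad with zeros to get a partner vector with nonzero $B^\ell$-pairing. The paper does this in the second slot only, whereas you do it in the first slot and note the symmetric case. Your remark that the induced map is $\ell$ copies of $\Phi$ is a harmless structural rephrasing that directly gives the isomorphism $v$.
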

\begin{proof}
    Fix $y=(y_1,\ldots,y_\ell)\in X^{\oplus\ell}$ with $y_k\neq0$. 
    Since $B$ is non-degenerate, 
    there exists $x_k\in X$ such that 
    $B(x_k,y_k)\neq0$. Hence, setting $x:=(0,\ldots,x_k,\ldots,0)$, 
    we have $B^\ell(x,y)\neq0$. 
\end{proof}

Let $X^*:=\Hom_\KK(X,\KK)$ denote the dual space of $X$. 
For a non-degenerate bilinear map $B: X\times X\to \KK$, 
define the induced isomorphism $\Phi: X\to X^*;\ x\mapsto\Phi_x$ as 
$$\forall x,x'\in X,\  \Phi_x(x'):=B(x,x').$$
Under this notation, we have the following. 
\begin{lemma}\label{lemma:PerpendicularComplement}
For any vector subspace $Y\subset X$, we have 
$$(X/Y)^*\cong Y^\perp:=\{x\in X\mid B(x,y)=0\ (\forall y\in Y)\}.$$
\end{lemma}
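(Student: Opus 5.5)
We need to prove Lemma PerpendicularComplement: $(X/Y)^*\cong Y^\perp$ under the isomorphism $\Phi$ induced by a non-degenerate $B$, with $X$ finite dimensional.

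The plan is to realize the isomorphism concretely through $\Phi$. Let $\pi: X\to X/Y$ be the quotient map. Its dual $\pi^*:(X/Y)^*\to X^*$, $\phi\mapsto\phi\circ\pi$, is injective because $\pi$ is surjective. Its image is exactly the annihilator $Y^0:=\{\psi\in X^*\mid \psi(y)=0\ (\forall y\in Y)\}$:
\begin{itemize}
\item every $\phi\circ\pi$ kills $Y$;
\item conversely, any $\psi\in Y^0$ factors through $\pi$ by the universal property of the quotient.
\end{itemize}
Hence $(X/Y)^*\cong Y^0$.

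Next, the restriction of $\Phi$ identifies $Y^\perp$ with $Y^0$. Since $\Phi$ is injective and $\dim X=\dim X^*$, it is an isomorphism $X\to X^*$. By definition, $x\in Y^\perp$ if and only if $\Phi_x(y)=B(x,y)=0$ for all $y\in Y$, that is, if and only if $\Phi_x\in Y^0$. Thus $\Phi(Y^\perp)=Y^0$ exactly: the inclusion $\Phi(Y^\perp)\subseteq Y^0$ is immediate, and surjectivity of $\Phi$ gives the reverse inclusion. Composing, $(\pi^*)^{-1}\circ\Phi|_{Y^\perp}: Y^\perp\to (X/Y)^*$ is the desired isomorphism. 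Explicitly, $x\in Y^\perp$ is sent to the functional $\bar{x'}\mapsto B(x,x')$, which is well defined on $X/Y$ precisely because $x\in Y^\perp$. This also matches the usage of $\Phi$ in map $(1)$ of \Cref{definition:linear-maps}.

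The only point needing care is which slot carries non-degeneracy. The paper's $B^\ell$ argument only shows non-degeneracy in one variable, so I will use non-degeneracy in the sense that $\Phi$ is injective, i.e.\ $B(x,\cdot)=0$ implies $x=0$. By finite dimensionality, this is equivalent to the other slot's non-degeneracy (the Gram matrix is invertible), so either convention suffices. As a sanity check, the dimension count gives $\dim Y^\perp=\dim X-\dim Y=\dim (X/Y)^*$. No real obstacle is expected.
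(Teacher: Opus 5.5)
Your proposal is correct and follows the same route as the paper: identify $(X/Y)^*$ with the annihilator of $Y$ in $X^*$ via the projection $\pi$, then transport this to $Y^\perp$ through the isomorphism $\Phi$. You simply supply details the paper leaves implicit, namely the injectivity of $\pi^*$, the factorization through the quotient, and the surjectivity of $\Phi$ from finite dimensionality, which also makes the choice of non-degenerate slot of $B$ irrelevant.
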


\begin{proof}
By considering a natural projection 
$\pi:X\to X/Y$, 
it is obvious that 
\[
(X/Y)^*:=\Hom_\KK(X/Y,\KK)
\cong
\{\varphi\in X^*\mid \varphi(Y)=0\}. 
\]
When we identify $X\cong X^*$ by $\Phi$, 
the equation $\Phi_x(Y)=0$ is equivalent to 
$B(x,y)=0$ for any $y\in Y$, or $x\in Y^\perp$. 
Hence, we have $(X/Y)^*\cong Y^\perp$. 
\end{proof}

The following map is a key tool for \Cref{theorem:main}. 
\begin{definition}\label{definition:sigma}
    For an $\ell$-multiarrangement $(\AAA,m)$ 
    and a non-degenerate $\KK$-bilinear map $B:S_n\times S_n\to\KK$, 
    define a $\KK$-linear map 
    $$\sigma: 
    \bigoplus_{H\in\AAA}
    \left(\alpha_H^{m(H)}S_{n-m(H)}\right)^\perp \to S_n^{\oplus\ell};\ \ 
    \bigoplus_{H\in\AAA} f_H \mapsto
    \bigoplus_{i=1}^\ell
    \left(
    \sum_{H\in\AAA} f_H\cdot\partial_{x_i}(\alpha_{H})
    \right).
    $$ 
\end{definition}

\begin{lemma}\label{lemma:composition}
    The map $\sigma$ 
    is obtained as the composition of the maps in \Cref{definition:linear-maps}, 
    which implies $\rank\sigma=\rank \Psi_n^*=\rank \Psi_n$. 
    As a result, we have \Cref{lemma:main}. 
\end{lemma}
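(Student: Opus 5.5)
I will prove that $\sigma$ equals the composition $v\circ\Psi_n^*\circ u\circ\bigoplus_{H}\Phi$. Since each of $\bigoplus_H\Phi$, $u$, $v$ is an isomorphism, this gives $\rank\sigma=\rank\Psi_n^*$, and $\rank\Psi_n^*=\rank\Psi_n$ holds for any linear map of finite dimensional spaces. To check the equality of maps, fix $\bigoplus_H f_H$ in the domain, with $f_H\in(\alpha_H^{m(H)}S_{n-m(H)})^\perp$. I then evaluate both sides against an arbitrary test element $\theta\in S_n^{\oplus\ell}$ using $B^\ell$, which is non-degenerate by \Cref{lemma:billinear}.

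First I trace the image step by step. The element $\bigoplus_H\Phi_{f_H}$ is well defined on the quotients $S_n/\alpha_H^{m(H)}S_{n-m(H)}$ because $f_H$ lies in the perpendicular space; this is \Cref{lemma:PerpendicularComplement}. Applying $u$ gives the functional $\bigoplus_H \overline{g_H}\mapsto\sum_H B(f_H,g_H)$. Composing with $\Psi_n$ then gives
\[
\omega(\theta)=\sum_{H\in\AAA}B\bigl(f_H,\theta(\alpha_H)\bigr)
=\sum_{H\in\AAA}B\Bigl(f_H,\sum_{i=1}^\ell \partial_{x_i}(\alpha_H)\,\theta(x_i)\Bigr).
\]
The second equality holds because $\alpha_H$ is linear, so $\theta(\alpha_H)=\sum_i\partial_{x_i}(\alpha_H)\theta(x_i)$, and each $\partial_{x_i}(\alpha_H)$ is a scalar in $\KK$. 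Using bilinearity to move these scalars into the first slot, I get $\omega(\theta)=\sum_i B\bigl(\sum_H \partial_{x_i}(\alpha_H) f_H,\theta(x_i)\bigr)=B^\ell(\sigma(\bigoplus f_H),\theta)$. By the definition of $v$ and the non-degeneracy of $B^\ell$, this shows $v(\omega)=\sigma(\bigoplus f_H)$. The only point needing care is that the partial derivatives are constants, so they can pass through $B$. This is where linearity of the $\alpha_H$ is used.

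For \Cref{lemma:main}, I apply rank–nullity twice. First, \Cref{proposition:kernel-characterization-of-log-derivation-modules} gives
\[
\dim D(\AAA,m)_n=\dim\ker\Psi_n=\ell\dim S_n-\rank\Psi_n .
\]
Second, $\rank\Psi_n=\rank\sigma=\dim(\text{domain of }\sigma)-\dim\ker\sigma$. The domain of $\sigma$ has dimension
\[
\sum_H\Bigl(\dim S_n-\dim S_{n-m(H)}\Bigr),
\]
because $\dim Y^\perp=\dim X-\dim Y$ for non-degenerate $B$ and multiplication by $\alpha_H^{m(H)}$ is injective. Substituting $\dim S_k=\binom{k+\ell-1}{\ell-1}$ gives the stated formula. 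This uses $n\ge m(H)$, so the binomials have nonnegative top entries. For $\ell=2$ each summand equals $(n+1)-(n-m(H)+1)=m(H)$, which yields $\dim\ker\sigma+2(n+1)-|m|$.

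I expect no real obstacle here. The main care is the bookkeeping that the identifications $u$ and $\Phi$ match the quotient pairing, and that the dimension of a perpendicular space is computed correctly.
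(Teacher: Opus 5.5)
Your proposal is correct and follows essentially the same route as the paper. You evaluate $v\circ\Psi_n^*\circ u\circ\bigoplus_H\Phi$ on $\bigoplus_H f_H$ against an arbitrary $\theta$, move the scalars $\partial_{x_i}(\alpha_H)$ through $B$ to identify the result with $\sigma(\bigoplus_H f_H)$, and then apply rank--nullity to obtain \Cref{lemma:main}; you also make explicit the count $\dim Y^\perp=\dim X-\dim Y$ for the domain of $\sigma$, which the paper uses without stating.
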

\begin{proof}
For 
$\bigoplus_{H\in\AAA} f_H\in
\bigoplus_{H\in\AAA}\left(\alpha_H^{m(H)}S_{n-m(H)}\right)^\perp$, it follows that 
\begin{align*}
    (v\circ\Psi_n^*\circ u\circ \bigoplus_{H\in\AAA} \Phi)\bigoplus_{H\in\AAA} f_H
    &=(v\circ\Psi_n^*\circ u)(\bigoplus_{H\in\AAA}\Phi_{f_H})\\
    &=(v\circ\Psi_n^*)(\sum_{H\in\AAA}\Phi_{f_H}).
\end{align*}
Here, $\Psi_n^*(\sum_{H\in\AAA}\Phi_{f_H})\in (S_n^{\oplus\ell})^*$ satisfies, 
for any $\theta=\bigoplus_{i=1}^\ell h_i\in S_n^{\oplus\ell}$, 
\begin{align*}
\Psi_n^*(\sum_{H\in\AAA}\Phi_{f_H})(\theta)
&=(\sum_{H\in\AAA}\Phi_{f_H})(\Psi_n(\theta))\\
&=(\sum_{H\in\AAA}\Phi_{f_H})(\bigoplus_{H\in\AAA}\overline{\theta(\alpha_H)})\\
&=\sum_{H\in\AAA}B(f_H,\theta(\alpha_H))\\
&=\sum_{H\in\AAA}B(f_H,\sum_{i=1}^\ell \partial_{x_i}(\alpha_H)\cdot h_i)\\
&=\sum_{H\in\AAA}\sum_{i=1}^\ell B(f_H,\partial_{x_i}(\alpha_H)\cdot h_i)\\
&=\sum_{i=1}^\ell\sum_{H\in\AAA} B(f_H \cdot\partial_{x_i}(\alpha_H),h_i)\\
&=\sum_{i=1}^\ell B(\sum_{H\in\AAA}f_H \cdot\partial_{x_i}(\alpha_H),h_i). 
\end{align*}

Since $v(\Psi_n^*(\sum_{H\in\AAA}\Phi_{f_H}))$ is the unique 
$\varphi=\bigoplus_{i=1}^\ell e_i\in S_n^{\oplus\ell}$ 
that satisfies 
\begin{align*}
\Psi_n^*(\sum_{H\in\AAA}\Phi_{f_H})(\theta)
=B^\ell(\varphi,\theta)
\iff 
\sum_{i=1}^\ell B(\sum_{H\in\AAA}f_H \cdot\partial_{x_i}(\alpha_H),h_i)
=\sum_{i=1}^\ell B(e_i,h_i) 
\end{align*}
for any $\theta\in S_n^{\oplus\ell}$, we have 
\begin{align*}
    (v\circ\Psi_n^*)(\sum_{H\in\AAA}\Phi_{f_H})
    &=\bigoplus_{i=1}^\ell
    \left(
    \sum_{H\in\AAA} f_H\cdot\partial_{x_i}(\alpha_{H})
    \right)\\
    &=\sigma(\bigoplus_{H\in\AAA} f_H) 
\end{align*}
as desired. 
%
%
    Finally, the equation of \Cref{lemma:main} holds
    by using the fundamental theorem on linear maps as 
    \begin{align*}
    \dim D(\AAA,m)_n
    &=\dim\ker \Psi_n\\
    &=\ell\cdot \binom{n+\ell-1}{\ell-1}-\rank \Psi_n\\
    &=\dim\ker\sigma+\ell\binom{n+\ell-1}{\ell-1}
    -\sum_{H\in\AAA}\left\{\binom{n+\ell-1}{\ell-1}-\binom{n-m(H)+\ell-1}{\ell-1}\right\}. 
    \end{align*}
    Here we use the fact $\dim_\KK S_n=\binom{n+\ell-1}{\ell-1}$. 
\end{proof}


When $\ell=2$, we also have the following. 
\begin{lemma}[cf. \cite{MR4752129:apolarity}-Lemma2.4, \cite{MR1735271:apolaritybook}-Lemma 1.31]
\label{lemma:key}
    Assume $\ell=2$. 
    It follows that the bilinear map 
    $B_n:S_n\times S_n\to\KK$ defined by 
    $$B_n(f,g)=g(\partial_y,-\partial_x)(f),\ \forall f,g\in S_n$$
    is non-degenerate 
    if $[p=0]$ or $[p\geq2$ and $n < p]$. 
    Under this bilinear map, for any $\alpha\in S_1\setminus\{0\}$ and any integers $m,n$ 
    satisfying $1\leq m \leq n$, we have 
    $$\left(\alpha^m S_{n-m}\right)^\perp
    =\alpha^{n-m+1} S_{m-1}.$$
\end{lemma}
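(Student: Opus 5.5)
The plan is to make the bilinear form explicit on the monomial basis to get non-degeneracy, and then prove the perpendicularity statement by an annihilation argument combined with a dimension count.

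\textbf{Non-degeneracy.} I would compute $B_n$ on the monomial basis $\{x^b y^{n-b}\}_{0\leq b\leq n}$ of $S_n$. For $g=x^a y^{n-a}$ we have $g(\partial_y,-\partial_x)=(-1)^{n-a}\partial_y^{a}\partial_x^{n-a}$. Applied to $f=x^b y^{n-b}$, this is zero unless $b=n-a$. In that case it gives $(-1)^{n-a}a!\,(n-a)!$. Hence the Gram matrix of $B_n$ in this basis is anti-diagonal with entries $\pm a!\,(n-a)!$ for $0\leq a\leq n$. These entries are all nonzero in $\KK$ exactly when no factorial up to $n!$ vanishes, i.e.\ when $p=0$ or $n<p$. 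So $B_n$ is non-degenerate under the stated hypothesis.

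\textbf{The inclusion $\alpha^{n-m+1}S_{m-1}\subset(\alpha^m S_{n-m})^\perp$.} Write $\alpha=ax+by$ and put $D_\alpha:=\alpha(\partial_y,-\partial_x)=a\partial_y-b\partial_x$. This is a derivation of $S$ with $D_\alpha(\alpha)=ab-ba=0$. Since substitution $g\mapsto g(\partial_y,-\partial_x)$ is a ring homomorphism into commuting constant-coefficient operators, for $g'=\alpha^m g$ with $g\in S_{n-m}$ we get $g'(\partial_y,-\partial_x)=g(\partial_y,-\partial_x)\circ D_\alpha^m$. For $f=\alpha^{n-m+1}h$ with $h\in S_{m-1}$, the Leibniz rule and $D_\alpha(\alpha)=0$ give $D_\alpha^m(\alpha^{n-m+1}h)=\alpha^{n-m+1}D_\alpha^m(h)$. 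This vanishes because $D_\alpha^m$ lowers degree by $m>\deg h=m-1$. Hence $B_n(f,g')=0$, which proves the inclusion.

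\textbf{Equality by counting dimensions.} Multiplication by a nonzero polynomial is injective in the domain $S$. Therefore $\dim\alpha^m S_{n-m}=n-m+1$ and $\dim\alpha^{n-m+1}S_{m-1}=m$. The perp in the statement is the one-sided perp $\{f\mid B_n(f,y)=0\ \forall y\in Y\}$. Since $B_n$ is non-degenerate (this holds on both sides, as its Gram matrix is invertible), this perp has dimension $(n+1)-(n-m+1)=m$, even though $B_n$ need not be symmetric. The inclusion of two subspaces of equal dimension is then an equality.

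\textbf{Main obstacle.} The only delicate point is in the inclusion step: one must keep the order of the operators straight, confirm that $D_\alpha$ kills $\alpha$, and check that the degree bound $m>\deg h$ is what makes $D_\alpha^m h$ vanish. The remaining steps are routine.
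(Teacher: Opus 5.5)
Your proof is correct. The non-degeneracy part is the same monomial Gram-matrix computation as in the paper; you pair the bases so the matrix is anti-diagonal instead of diagonal. For the perpendicularity statement, both arguments rest on the identity $B_n(f,\alpha^m h)=B_{n-m}(D_\alpha^m f,h)$ with $D_\alpha=a\partial_y-b\partial_x$.

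You differ in how the equality is closed. The paper uses this identity as an equivalence: it invokes non-degeneracy of $B_{n-m}$ to identify $(\alpha^m S_{n-m})^\perp$ with $\ker D_\alpha^m\subset S_n$. It then computes that kernel explicitly in the adapted basis $\alpha^n,\alpha^{n-1}x,\ldots,x^n$ (or the analogous one with $y$). You use only the easy half, namely that $D_\alpha^m$ annihilates $\alpha^{n-m+1}S_{m-1}$, which holds in every characteristic. You then replace the kernel computation by the count $\dim Y^\perp=\dim S_n-\dim Y$ for the non-degenerate form $B_n$.

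Your version isolates exactly where the hypothesis ($p=0$ or $n<p$) enters: only in the dimension count. The paper's kernel computation also tacitly needs it, since $D_\alpha^m(\alpha^{n-k}x^k)$ carries the factor $k(k-1)\cdots(k-m+1)$. The paper's version, in return, gives the extra and independently useful description of the perpendicular space as $\ker D_\alpha^m$.
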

\begin{proof}
    First, $B_n$ is non-degenerate since 
    \begin{equation*}
    B_n(x^iy^{n-i}, x^{n-j}y^j)
    =(-1)^{j}\cdot\partial_y^{n-j}\partial_x^j(x^iy^{n-i})
    =
    \begin{cases}
    \ \ \ \ \ \ \ \ \ \ 0 & \text{if } (i\neq j) \\
    (-1)^j\cdot j!(n-j)! & \text{if } (i=j)
    \end{cases}
    \end{equation*}
    for any monomial of degree $n$. 
    
    Let $\alpha=ax+by$ with $a,b\in\KK$ and define a derivation 
    $D_\alpha:=a\partial_y-b\partial_x \in (\Der S)_0\setminus\{0\}$. 
    Since $B_n(f,\alpha^mh)=B_{n-m}(D_\alpha^m(f),h)$ 
    for any $f\in S_{n}$ and $h\in S_{n-m}$,
    we have 
    $$\left(\alpha^m S_{n-m}\right)^\perp
    =\{f\in S_n \mid D_\alpha^m(f)=0\}. 
    $$
    Since 
    $S_n
    =\langle \alpha^n, \alpha^{n-1}x,\ldots, \alpha x^{n-1}, x^n \rangle_{\KK}$ 
    or 
    $S_n=\langle \alpha^n, \alpha^{n-1}y,\ldots, \alpha y^{n-1}, y^n \rangle_{\KK}$, 
    we have 
    $$\{f\in S_n \mid D_\alpha^m(f)=0\}
    =\alpha^{n-m+1} S_{m-1},$$
    as desired. 
\end{proof}

\begin{remark}
The bilinear map $B_n$ in \Cref{lemma:key} 
is essentially the same as the so-called 
{apolar pairing}. 
Apolarity is considered as a fundamental tool 
to study polynomial {Waring Problem}, 
which asks how many $1$-forms $\alpha_i$ are needed 
to write a degree $d$ homogeneous polynomial 
by the form $\sum \alpha_i^d$. 
\end{remark}

Now we are ready to prove the main theorem. 

\begin{proof}[Proof of \Cref{theorem:main}]
    Owing to \Cref{corollary:border}, 
    it remains to consider the case either 
    $p=0$ or $p\geq3$ and $|m|\leq2p-2$. 
    By \Cref{lemma: exponents is settled by half minus one}, 
    it suffices to show that 
    $$\dim_{\KK}D(\AAA(B_2),m)_{s-1}=\dim_{\KK}D(\AAA(B_2),m^\perp)_{s-1}$$ 
    for $s:=|m|/2$. 
    Also, let $n:=s-1$ be the nonnegative integer. 
    Since $|m|=2s=2(n+1)$, we have 
    $\dim_{\KK}D(\AAA(B_2),m)_n=\dim_{\KK}\ker\sigma$ 
    by \Cref{lemma:main}. 

    By \Cref{lemma:key}, 
    under the bilinear map $B_n$, 
    $\sigma$ is to be a linear map between 
    $$\bigoplus_{H\in\AAA}
    \left(\alpha_H^{n-m(H)+1}S_{m(H)-1}\right) 
    \to S_n\oplus S_n.$$ 
    We show the isomorphism 
    $D(\AAA(B_2),m^\perp)_n\cong\ker\sigma$. 
    For the  multiarrangement $(\AAA(B_2),m)$, 
    let us fix a defining polynomial by 
    $Q(\AAA(B_2),m)=x^ay^b(x-y)^c(x+y)^d$. 
    %
    
    First, assume 
    $\oplus f=(f_{x},f_{y},f_{x-y},f_{x+y})\in\ker\sigma$, where 
    each $f_{\alpha_H}$ belongs to 
    $\alpha_H^{n-m(H)+1}S_{m(H)-1}$ 
    for $H\in\AAA(B_2)$. 
    Since 
    $f_{\alpha_H}$ is divided by 
    $\alpha_H^{n-m(H)+1}=\alpha_H^{s-m(H)}$ for any $H\in\AAA(B_2)$ 
    and 
    $\sigma(\oplus f)
    =(f_{x}+f_{x-y}+f_{x+y}, f_{y}-f_{x-y}+f_{x+y})
    =(0,0)$, 
    we have 
    $f_x-f_y=-2f_{x-y}\in(x-y)^{s-c}S_{c-1}$ and 
    $f_x+f_y=-2f_{x+y}\in(x+y)^{s-d}S_{d-1}$, 
    which implies 
    $f_x\partial_x+f_y\partial_y \in D(\AAA(B_2),m^\perp)_n$. 
    
    Conversely, for arbitrary 
    $f_x\partial_x+f_y\partial_y \in D(\AAA(B_2),m^\perp)_n$,
    we have $(f_x,f_y,f_{x-y},f_{x+y})\in\ker\sigma$ 
    by setting 
    $f_{x-y}:=-\frac{1}{2}(f_x-f_y)$ and 
    $f_{x+y}:=-\frac{1}{2}(f_x+f_y)$. 
    
    Since these two $\KK$-linear maps are inverse to each other, 
    we have 
    $D(\AAA(B_2),m^\perp)_{n} \cong \ker\sigma$ 
    as $\KK$-vector spaces. 
    Therefore, it follows that 
    $\dim_{\KK}D(\AAA(B_2),m)_n=\dim_{\KK}D(\AAA(B_2),m^\perp)_n
    =\dim_{\KK}\ker\sigma$, as desired. 
\end{proof}

\section{Future Work}\label{section:Future Work}

The complete classification of the exponents 
for Coxeter multiarrangements 
remains open, 
except for those of type $A_2$ by Wakamiko \cite{Wakamiko}. 
Focusing on the type $B_2$, 
a computer calculation might suggest the following phenomenon 
for $p=0$. 
\begin{conjecture}
    Let $(\AAA(B_2),m)$ be a Coxeter multiarrangement of type $B_2$. 
    If the following conditions are satisfied, 
    then $m$ is not a \emph{peak point} 
    (see Abe--Numata \cite{Abe-Numata} for definition). 
    \begin{itemize}
        \item $m$ is balanced, $|m|$ is even, 
        \item {$3\leq m(H)\leq |m|/2-3$} for all $H\in\AAA(B_2)$ and 
        \item $\min\{|m(H_x)-m(H_y)|, |m(H_{x-y})-m(H_{x+y})|\}\geq4$. 
    \end{itemize}
\end{conjecture}

In contrast, 
for $p\geq3$, there are much fewer results for $\exp(\AAA(B_2),m)$. 
\begin{problem}
    For $p\geq3$ and $|m|>2p+2$, 
    when does the 
    exponent-preserving duality hold?
\end{problem}

\begin{problem}
    For some $p\geq3$, 
    is it possible to completely classify the $B_2$-exponents 
    before we complete the classification for $p=0$?
\end{problem}

It might also be important 
to give another proof for \Cref{theorem:main}; 
there can exist more natural proofs which lead to further applications 
for other underlying arrangements. 
\begin{problem}
    Does there exist such a new $\exp$-preserving duality 
    for other arrangements?
\end{problem}


\section*{Acknowledgements}
\Cref{theorem:main} was first conjectured by Torsten Hoge 
when the author, Hoge and Sven Wiesner 
were discussing the classification of $B_2$-exponents. 
The insights of Hoge and Wiesner have improved the work a lot. 
The author thanks these two collaborators very much. 
The author also thanks Takuro Abe for his many helpful comments. 
Furthermore, a generative AI ChatGPT assisted this study, 
including a first idea to approach \Cref{theorem:main}. 
The author was supported by WISE program (MEXT) at Kyushu University, 
and was also partially supported by 
JSPS KAKENHI Grant Number JP25K24692. 
\bibliographystyle{plain}
\bibliography{999_bib_mr,998_bib_arxiv}

@Misc{Macaulay2,
          author = {Grayson, Daniel R. and Stillman, Michael E.},
          title = {Macaulay2, a software system for research in algebraic geometry},
          howpublished = {Available at \url{http://www2.macaulay2.com}}
        }

@manual{sagemath,
  Key          = {SageMath},
  Author       = {{The Sage Developers}},
  Title        = {{S}ageMath, the {S}age {M}athematics {S}oftware {S}ystem ({V}ersion 10.9)},
  note         = {available at \url{https://www.sagemath.org}},
  Year         = {2026},
}

@misc{Numata-algorithm,
Author = {Yasuhide Numata},
Title = {An Algorithm to Construct A Basis for the Module of Logarithmic Vector Fields},
Year = {2007},
Eprint = {arXiv:0707.0004},
howpublished = {arXiv:0707.0004}
}

@misc{Tsujie-Uchiumi,
Author = {Shuhei Tsujie and Ryo Uchiumi},
Title = {Exponents of 2-multiarrangements of three lines over fields of positive characteristic},
Year = {2025},
Eprint = {arXiv:2506.07070},
howpublished = {arXiv:2506.07070}
}

@misc{Maehara-WYmatrix,
Author = {Shota Maehara},
Title = {Exponents of $2$-multiarrangements and {W}akefield--{Y}uzvinsky matrices},
Year = {2025},
Eprint = {arXiv:2509.16569},
howpublished = {arXiv:2509.16569}
}

@article {Hoge-private,
    AUTHOR = {Torsten Hoge},
     TITLE = {private communications},
   JOURNAL = {\texttt{torsten.hoge@rub.de}},
     Email = {torsten.hoge@rub.de},
}

@article {Wiesner-private,
    AUTHOR = {Sven Wiesner},
     TITLE = {private communications},
   JOURNAL = {\texttt{sven.wiesner@rub.de}},
     Email = {sven.wiesner@rub.de},
}

@article {Saito,
    AUTHOR = {Saito, Kyoji},
     TITLE = {Theory of logarithmic differential forms and logarithmic
              vector fields},
   JOURNAL = {J. Fac. Sci. Univ. Tokyo Sect. IA Math.},
  FJOURNAL = {Journal of the Faculty of Science. University of Tokyo.
              Section IA. Mathematics},
    VOLUME = {27},
      YEAR = {1980},
    NUMBER = {2},
     PAGES = {265--291},
      ISSN = {0040-8980},
   MRCLASS = {32G11 (14D05 32B30)},
  MRNUMBER = {586450},
MRREVIEWER = {Zoghman\ Mebkhout},
}

@book {Orlik-Terao,
    AUTHOR = {Orlik, Peter and Terao, Hiroaki},
     TITLE = {Arrangements of hyperplanes},
    SERIES = {Grundlehren der mathematischen Wissenschaften [Fundamental
              Principles of Mathematical Sciences]},
    VOLUME = {300},
 PUBLISHER = {Springer-Verlag, Berlin},
      YEAR = {1992},
     PAGES = {xviii+325},
      ISBN = {3-540-55259-6},
   MRCLASS = {52B30 (14F35 20F36 20F55 32S25 57N65)},
  MRNUMBER = {1217488},
MRREVIEWER = {Michel\ Yves\ Jambu},
       DOI = {10.1007/978-3-662-02772-1},
       URL = {https://doi.org/10.1007/978-3-662-02772-1},
}

@incollection {Ziegler,
    AUTHOR = {Ziegler, G\"{u}nter M.},
     TITLE = {Multiarrangements of hyperplanes and their freeness},
 BOOKTITLE = {Singularities ({I}owa {C}ity, {IA}, 1986)},
    SERIES = {Contemp. Math.},
    VOLUME = {90},
     PAGES = {345--359},
 PUBLISHER = {Amer. Math. Soc., Providence, RI},
      YEAR = {1989},
      ISBN = {0-8218-5096-2},
   MRCLASS = {32C40 (05B35)},
  MRNUMBER = {1000610},
MRREVIEWER = {Hiroaki\ Terao},
       DOI = {10.1090/conm/090/1000610},
       URL = {https://doi.org/10.1090/conm/090/1000610},
}

@article {Yoshinaga-criterion,
    AUTHOR = {Yoshinaga, Masahiko},
     TITLE = {On the freeness of 3-arrangements},
   JOURNAL = {Bull. London Math. Soc.},
  FJOURNAL = {The Bulletin of the London Mathematical Society},
    VOLUME = {37},
      YEAR = {2005},
    NUMBER = {1},
     PAGES = {126--134},
      ISSN = {0024-6093,1469-2120},
   MRCLASS = {52C35 (32S22)},
  MRNUMBER = {2105827},
MRREVIEWER = {Graham\ C.\ Denham},
       DOI = {10.1112/S0024609304003704},
       URL = {https://doi.org/10.1112/S0024609304003704},
}

@article {Abe:chamber,
    AUTHOR = {Abe, Takuro},
     TITLE = {Chambers of 2-affine arrangements and freeness of
              3-arrangements},
   JOURNAL = {J. Algebraic Combin.},
  FJOURNAL = {Journal of Algebraic Combinatorics. An International Journal},
    VOLUME = {38},
      YEAR = {2013},
    NUMBER = {1},
     PAGES = {65--78},
      ISSN = {0925-9899,1572-9192},
   MRCLASS = {52C35},
  MRNUMBER = {3070120},
MRREVIEWER = {Kristopher\ Williams},
       DOI = {10.1007/s10801-012-0393-x},
       URL = {https://doi.org/10.1007/s10801-012-0393-x},
}

@article {Abe:B_2,
    AUTHOR = {Abe, Takuro},
     TITLE = {The stability of the family of {$B_2$}-type arrangements},
   JOURNAL = {Comm. Algebra},
  FJOURNAL = {Communications in Algebra},
    VOLUME = {37},
      YEAR = {2009},
    NUMBER = {4},
     PAGES = {1193--1215},
      ISSN = {0092-7872,1532-4125},
   MRCLASS = {32S22 (14J60)},
  MRNUMBER = {2510979},
MRREVIEWER = {Daniel\ C.\ Cohen},
       DOI = {10.1080/00927870802465969},
       URL = {https://doi.org/10.1080/00927870802465969},
}

@article {Wakefield-Yuzvinsky,
    AUTHOR = {Wakefield, Max and Yuzvinsky, Sergey},
     TITLE = {Derivations of an effective divisor on the complex projective
              line},
   JOURNAL = {Trans. Amer. Math. Soc.},
  FJOURNAL = {Transactions of the American Mathematical Society},
    VOLUME = {359},
      YEAR = {2007},
    NUMBER = {9},
     PAGES = {4389--4403},
      ISSN = {0002-9947,1088-6850},
   MRCLASS = {14N20 (13N15 52C35)},
  MRNUMBER = {2309190},
MRREVIEWER = {Daniel\ Matei},
       DOI = {10.1090/S0002-9947-07-04222-5},
       URL = {https://doi.org/10.1090/S0002-9947-07-04222-5},
}

@article {Abe-Numata,
    AUTHOR = {Abe, Takuro and Numata, Yasuhide},
     TITLE = {Exponents of 2-multiarrangements and multiplicity lattices},
   JOURNAL = {J. Algebraic Combin.},
  FJOURNAL = {Journal of Algebraic Combinatorics. An International Journal},
    VOLUME = {35},
      YEAR = {2012},
    NUMBER = {1},
     PAGES = {1--17},
      ISSN = {0925-9899,1572-9192},
   MRCLASS = {52C35},
  MRNUMBER = {2873095},
MRREVIEWER = {Kelly\ Jeanne\ Pearson},
       DOI = {10.1007/s10801-011-0291-7},
       URL = {https://doi.org/10.1007/s10801-011-0291-7},
}

@article {Wakamiko,
    AUTHOR = {Wakamiko, Atsushi},
     TITLE = {On the exponents of 2-multiarrangements},
   JOURNAL = {Tokyo J. Math.},
  FJOURNAL = {Tokyo Journal of Mathematics},
    VOLUME = {30},
      YEAR = {2007},
    NUMBER = {1},
     PAGES = {99--116},
      ISSN = {0387-3870},
   MRCLASS = {32S22 (52C35)},
  MRNUMBER = {2328057},
MRREVIEWER = {Daniel\ C.\ Cohen},
       DOI = {10.3836/tjm/1184963649},
       URL = {https://doi.org/10.3836/tjm/1184963649},
}

@article {Wakamiko:universal,
    AUTHOR = {Wakamiko, Atsushi},
     TITLE = {Bases for the derivation modules of two-dimensional
              multi-{C}oxeter arrangements and universal derivations},
   JOURNAL = {Hokkaido Math. J.},
  FJOURNAL = {Hokkaido Mathematical Journal},
    VOLUME = {40},
      YEAR = {2011},
    NUMBER = {3},
     PAGES = {375--392},
      ISSN = {0385-4035},
   MRCLASS = {13N15 (32C38 32S22)},
  MRNUMBER = {2883497},
MRREVIEWER = {Michael\ Gr.\ Voskoglou},
       DOI = {10.14492/hokmj/1319595862},
       URL = {https://doi.org/10.14492/hokmj/1319595862},
}

@article {MR0608532,
    AUTHOR = {Terao, Hiroaki},
     TITLE = {Generalized exponents of a free arrangement of hyperplanes and
              {S}hepherd-{T}odd-{B}rieskorn formula},
   JOURNAL = {Invent. Math.},
  FJOURNAL = {Inventiones Mathematicae},
    VOLUME = {63},
      YEAR = {1981},
    NUMBER = {1},
     PAGES = {159--179},
      ISSN = {0020-9910},
   MRCLASS = {32C40 (05B25 51F15 52A45)},
  MRNUMBER = {608532},
MRREVIEWER = {P. Orlik},
       DOI = {10.1007/BF01389197},
       URL = {https://doi.org/10.1007/BF01389197},
}

@article {2312.06356,
    AUTHOR = {Maehara, Shota and Numata, Yasuhide},
     TITLE = {Explicit description of bases for derivations of some
              {C}oxeter multiarrangements of type {$B_2$}},
   JOURNAL = {Comm. Algebra},
  FJOURNAL = {Communications in Algebra},
    VOLUME = {53},
      YEAR = {2025},
    NUMBER = {11},
     PAGES = {4718--4740},
      ISSN = {0092-7872,1532-4125},
   MRCLASS = {52C35},
  MRNUMBER = {4950644},
MRREVIEWER = {So\ Yamagata},
       DOI = {10.1080/00927872.2025.2497421},
       URL = {https://doi.org/10.1080/00927872.2025.2497421},
}

@article {Feigin-Wang-Yoshinaga,
    AUTHOR = {Feigin, Misha and Wang, Zixuan and Yoshinaga, Masahiko},
     TITLE = {Integral expressions for derivations of multiarrangements},
   JOURNAL = {Internat. J. Math.},
  FJOURNAL = {International Journal of Mathematics},
    VOLUME = {36},
      YEAR = {2025},
    NUMBER = {4},
     PAGES = {Paper No. 2450085, 20},
      ISSN = {0129-167X,1793-6519},
   MRCLASS = {52C35 (20F55)},
  MRNUMBER = {4858792},
MRREVIEWER = {Weili\ Guo},
       DOI = {10.1142/S0129167X2450085X},
       URL = {https://doi.org/10.1142/S0129167X2450085X},
}

@article{Hoge-M-Wiesner,
Author = {Torsten Hoge and Shota Maehara and Sven Wiesner},
Title = {Extendability of the {$B_2$-}arrangement},
JOURNAL = 
{Ann. Comb.},
Year = {2026}, 
Note = {\url{https://doi.org/10.1007/s00026-026-00869-z}},
}

@article {DiPasquale:homological,
    AUTHOR = {DiPasquale, Michael},
     TITLE = {A homological characterization for freeness of
              multi-arrangements},
   JOURNAL = {Math. Ann.},
  FJOURNAL = {Mathematische Annalen},
    VOLUME = {385},
      YEAR = {2023},
    NUMBER = {1-2},
     PAGES = {745--786},
      ISSN = {0025-5831,1432-1807},
   MRCLASS = {14Q10 (13D02 13N15 13P20 14N20 52C35)},
  MRNUMBER = {4542730},
MRREVIEWER = {Joseph\ Kung},
       DOI = {10.1007/s00208-021-02357-6},
       URL = {https://doi.org/10.1007/s00208-021-02357-6},
}

@incollection {Yoshinaga-extendable,
    AUTHOR = {Yoshinaga, Masahiko},
     TITLE = {On the extendability of free multiarrangements},
 BOOKTITLE = {Arrangements, local systems and singularities},
    SERIES = {Progr. Math.},
    VOLUME = {283},
     PAGES = {273--281},
 PUBLISHER = {Birkh\"{a}user Verlag, Basel},
      YEAR = {2010},
      ISBN = {978-3-0346-0208-2},
   MRCLASS = {52C35 (32S22)},
  MRNUMBER = {3025868},
MRREVIEWER = {Kelly\ Jeanne\ Pearson},
       DOI = {10.1007/978-3-0346-0209-9\{_}11}

@article {Abe-deleted-A3,
    AUTHOR = {Abe, Takuro},
     TITLE = {Free and non-free multiplicity on the deleted {$A_3$}
              arrangement},
   JOURNAL = {Proc. Japan Acad. Ser. A Math. Sci.},
  FJOURNAL = {Japan Academy. Proceedings. Series A. Mathematical Sciences},
    VOLUME = {83},
      YEAR = {2007},
    NUMBER = {7},
     PAGES = {99--103},
      ISSN = {0386-2194},
   MRCLASS = {32S22},
  MRNUMBER = {2361419},
MRREVIEWER = {Graham\ C.\ Denham},
       URL = {http://projecteuclid.org/euclid.pja/1200672008},
}

@article {Dipasquale-et-al-A3,
    AUTHOR = {DiPasquale, Michael and Francisco, Christopher A. and Mermin,
              Jeffrey and Schweig, Jay},
     TITLE = {Free and non-free multiplicities on the {$A_3$} arrangement},
   JOURNAL = {J. Algebra},
  FJOURNAL = {Journal of Algebra},
    VOLUME = {544},
      YEAR = {2020},
     PAGES = {498--532},
      ISSN = {0021-8693,1090-266X},
   MRCLASS = {32S22 (52C35)},
  MRNUMBER = {4031133},
MRREVIEWER = {Michele\ Torielli},
       DOI = {10.1016/j.jalgebra.2019.10.029},
       URL = {https://doi.org/10.1016/j.jalgebra.2019.10.029},
}

@article {DiPasquale-Wakefield-X3,
    AUTHOR = {DiPasquale, Michael and Wakefield, Max},
     TITLE = {Free multiplicities on the moduli of {$X_3$}},
   JOURNAL = {J. Pure Appl. Algebra},
  FJOURNAL = {Journal of Pure and Applied Algebra},
    VOLUME = {222},
      YEAR = {2018},
    NUMBER = {11},
     PAGES = {3345--3359},
      ISSN = {0022-4049,1873-1376},
   MRCLASS = {52C35},
  MRNUMBER = {3806729},
MRREVIEWER = {Vladimir\ N.\ Gruji\'{c}},
       DOI = {10.1016/j.jpaa.2017.12.011},
       URL = {https://doi.org/10.1016/j.jpaa.2017.12.011},
}

@article {MR4752129:apolarity,
    AUTHOR = {Blekherman, Grigoriy and Chen, Justin and Jung, Jaewoo},
     TITLE = {Sums of squares, {H}ankel index and almost real rank},
   JOURNAL = {Forum Math. Sigma},
  FJOURNAL = {Forum of Mathematics. Sigma},
    VOLUME = {12},
      YEAR = {2024},
     PAGES = {Paper No. e70, 21},
      ISSN = {2050-5094},
   MRCLASS = {14N07 (14P10 52A20)},
  MRNUMBER = {4752129},
MRREVIEWER = {Juan\ C.\ Migliore},
       DOI = {10.1017/fms.2024.45},
       URL = {https://doi.org/10.1017/fms.2024.45},
}

@article{MR1735271:apolaritybook,
author="Anthony, Iarrobino and Vassil, Kanev",
title="{P}ower {S}ums, {G}orenstein {A}lgebras, and {D}eterminantal {L}oci",
journal="Lecture Notes in Mathematics $1721$",
ISSN="0075-8434",
publisher="Springer Berlin Heidelberg",
year="1999",
DOI="10.1007/bfb0093426",
URL="https://cir.nii.ac.jp/crid/1360017289989645824"
}

\end{document}